\newcommand{\Mdef}[2]{\newcommand{#1}{\relax \ifmmode #2 \else $#2$\fi}}
\newcommand{\finbuilds}{\models}
\newcommand{\builds}{\vdash}
\newcommand{\sm }{\wedge}
\newcommand{\tensor}{\otimes}
\newcommand{\map}{\mathrm{map}}
\newcommand{\Hom}{\mathrm{Hom}}
\newcommand{\Ext}{\mathrm{Ext}}
\Mdef{\bhom}{\mathbf{\hat{H}om}}
\Mdef{\Mod}{\mathrm{mod}}
\newtheorem{thm}{Theorem}[section]
\newtheorem{lemma}[thm]{Lemma}
\newtheorem{prop}[thm]{Proposition}
\newtheorem{cor}[thm]{Corollary}
\theoremstyle{definition}
\newtheorem{defn}[thm]{Definition}
\newtheorem{assumption}[thm]{Assumption}
\newtheorem{example}[thm]{Example}
\newtheorem{remark}[thm]{Remark}
\newcommand{\qqed}{\qed \\[1ex]}
\renewenvironment{proof}[1][\hspace*{-.8ex}]{\noindent {\bf Proof #1:\;}}{\qqed}
\Mdef{\PH} {\Phi^H}
\Mdef{\PK} {\Phi^K}
\Mdef{\PL} {\Phi^L}
\Mdef{\PT} {\Phi^{\T}}
\Mdef{\ef}{E{\cF}_+}
\Mdef{\etf}{\widetilde{E}{\cF}}
\Mdef{\eg}{E{G}_+}
\Mdef{\etg}{\tilde{E}{G}}
\Mdef{\infl}{\mathrm{inf}}
\Mdef{\defl}{\mathrm{def}}
\Mdef{\res}{\mathrm{res}}
\Mdef{\ind}{\mathrm{ind}}
\Mdef{\coind}{\mathrm{coind}}
\Mdef{\univ}{\mathcal{U}}
\Mdef{\Fp}{\mathbb{F}_p}
\Mdef{\Zpinfty}{\Z /p^{\infty}}
\Mdef{\Zpadic}{\Z_p^{\wedge}}
\newcommand{\bi}{\begin{itemize}}
\newcommand{\be}{\begin{enumerate}}
\newcommand{\bc}{\begin{center}}
\newcommand{\bd}{\begin{description}}
\newcommand{\ei}{\end{itemize}}
\newcommand{\ee}{\end{enumerate}}
\newcommand{\ec}{\end{center}}
\newcommand{\ed}{\end{description}}
\newcommand{\lra}{\longrightarrow}
\newcommand{\lla}{\longleftarrow}
\Mdef{\we}{\mathbf{we}}
\Mdef{\fib}{\mathbf{fib}}
\Mdef{\cof}{\mathbf{cof}}
\Mdef{\BI}{\mathcal{BI}}
\newcommand{\fibre}{\mathrm{fibre}}
\newcommand{\ilim}{\mathop{ \mathop{\mathrm{lim}} \limits_\leftarrow} \nolimits}
\newcommand{\holim}{\mathop{ \mathop{\mathrm {holim}} \limits_\leftarrow} \nolimits}
\Mdef{\A}{\mathbb{A}}
\Mdef{\B}{\mathbb{B}}
\Mdef{\C}{\mathbb{C}}
\Mdef{\D}{\mathbb{D}}
\Mdef{\E}{\mathbb{E}}
\Mdef{\T}{\mathbb{T}}
\Mdef{\F}{\mathbb{F}}
\Mdef{\G}{\mathbb{G}}
\Mdef{\I}{\mathbb{I}}
\Mdef{\N}{\mathbb{N}}
\Mdef{\Q}{\mathbb{Q}}
\Mdef{\R}{\mathbb{R}}
\Mdef{\bbS}{\mathbb{S}}
\Mdef{\Z}{\mathbb{Z}}
\Mdef{\bA}{\mathbb{A}}
\Mdef{\bB}{\mathbb{B}}
\Mdef{\bC}{\mathbb{C}}
\Mdef{\bD}{\mathbb{D}}
\Mdef{\bE}{\mathbb{E}}
\Mdef{\bF}{\mathbb{F}}
\Mdef{\bG}{\mathbb{G}}
\Mdef{\bH}{\mathbb{H}}
\Mdef{\bI}{\mathbb{I}}
\Mdef{\bJ}{\mathbb{J}}
\Mdef{\bK}{\mathbb{K}}
\Mdef{\bL}{\mathbb{L}}
\Mdef{\bM}{\mathbb{M}}
\Mdef{\bN}{\mathbb{N}}
\Mdef{\bO}{\mathbb{O}}
\Mdef{\bP}{\mathbb{P}}
\Mdef{\bQ}{\mathbb{Q}}
\Mdef{\bR}{\mathbb{R}}
\Mdef{\bS}{\mathbb{S}}
\Mdef{\bT}{\mathbb{T}}
\Mdef{\bU}{\mathbb{U}}
\Mdef{\bV}{\mathbb{V}}
\Mdef{\bW}{\mathbb{W}}
\Mdef{\bX}{\mathbb{X}}
\Mdef{\bY}{\mathbb{Y}}
\Mdef{\bZ}{\mathbb{Z}}
\Mdef{\cA}{\mathcal{A}}
\Mdef{\cB}{\mathcal{B}}
\Mdef{\cC}{\mathcal{C}}
\Mdef{\mcD}{\mathcal{D}} 
\Mdef{\cE}{\mathcal{E}}
\Mdef{\cF}{\mathcal{F}}
\Mdef{\cG}{\mathcal{G}}
\Mdef{\mcH}{\mathcal{H}} 
\Mdef{\cI}{\mathcal{I}}
\Mdef{\cJ}{\mathcal{J}}
\Mdef{\cK}{\mathcal{K}}
\Mdef{\mcL}{\mathcal{L}}
\Mdef{\cM}{\mathcal{M}}
\Mdef{\cN}{\mathcal{N}}
\Mdef{\cO}{\mathcal{O}}
\Mdef{\cP}{\mathcal{P}}
\Mdef{\cQ}{\mathcal{Q}}
\Mdef{\mcR}{\mathcal{R}}
\Mdef{\cS}{\mathcal{S}}
\Mdef{\cT}{\mathcal{T}}
\Mdef{\cU}{\mathcal{U}}
\Mdef{\cV}{\mathcal{V}}
\Mdef{\cW}{\mathcal{W}}
\Mdef{\cX}{\mathcal{X}}
\Mdef{\cY}{\mathcal{Y}}
\Mdef{\cZ}{\mathcal{Z}}
\Mdef{\ca}{\mathcal{a}}
\Mdef{\ct}{\mathcal{t}}
\Mdef{\At}{\tilde{A}}
\Mdef{\Bt}{\tilde{B}}
\Mdef{\Ct}{\tilde{C}}
\Mdef{\Et}{\tilde{E}}
\Mdef{\Ht}{\tilde{H}}
\Mdef{\Kt}{\tilde{K}}
\Mdef{\Lt}{\tilde{L}}
\Mdef{\Mt}{\tilde{M}}
\Mdef{\Nt}{\tilde{N}}
\Mdef{\Pt}{\tilde{P}}
\Mdef{\tA}{\tilde{A}}
\Mdef{\tB}{\tilde{B}}
\Mdef{\tC}{\tilde{C}}
\Mdef{\tE}{\tilde{E}}
\Mdef{\tH}{\tilde{H}}
\Mdef{\tK}{\tilde{K}}
\Mdef{\tL}{\tilde{L}}
\Mdef{\tM}{\tilde{M}}
\Mdef{\tN}{\tilde{N}}
\Mdef{\tP}{\tilde{P}}
\Mdef{\ft}{\tilde{f}}
\Mdef{\xt}{\tilde{x}}
\Mdef{\yt}{\tilde{y}}
\Mdef{\Ab}{\overline{A}}
\Mdef{\Bb}{\overline{B}}
\Mdef{\Cb}{\overline{C}}
\Mdef{\Db}{\overline{D}}
\Mdef{\Eb}{\overline{E}}
\Mdef{\Fb}{\overline{F}}
\Mdef{\Gb}{\overline{G}}
\Mdef{\Hb}{\overline{H}}
\Mdef{\Ib}{\overline{I}}
\Mdef{\Jb}{\overline{J}}
\Mdef{\Kb}{\overline{K}}
\Mdef{\Lb}{\overline{L}}
\Mdef{\Mb}{\overline{M}}
\Mdef{\Nb}{\overline{N}}
\Mdef{\Ob}{\overline{O}}
\Mdef{\Pb}{\overline{P}}
\Mdef{\Qb}{\overline{Q}}
\Mdef{\Rb}{\overline{R}}
\Mdef{\Sb}{\overline{S}}
\Mdef{\Tb}{\overline{T}}
\Mdef{\Ub}{\overline{U}}
\Mdef{\Vb}{\overline{V}}
\Mdef{\Wb}{\overline{W}}
\Mdef{\Xb}{\overline{X}}
\Mdef{\Yb}{\overline{Y}}
\Mdef{\Zb}{\overline{Z}}
\Mdef{\db}{\overline{d}}
\Mdef{\hb}{\overline{h}}
\Mdef{\qb}{\overline{q}}
\Mdef{\rb}{\overline{r}}
\Mdef{\tb}{\overline{t}}
\Mdef{\ub}{\overline{u}}
\Mdef{\vb}{\overline{v}}
\Mdef{\hc}{\hat{c}}
\Mdef{\he}{\hat{e}}
\Mdef{\hf}{\hat{f}}
\Mdef{\hA}{\hat{A}}
\Mdef{\hH}{\hat{H}}
\Mdef{\hJ}{\hat{J}}
\Mdef{\hM}{\hat{M}}
\Mdef{\hP}{\hat{P}}
\Mdef{\hQ}{\hat{Q}}
\Mdef{\thetab}{\overline{\theta}}
\Mdef{\phib}{\overline{\phi}}
\Mdef{\uA}{\underline{A}}
\Mdef{\uB}{\underline{B}}
\Mdef{\uC}{\underline{C}}
\Mdef{\uD}{\underline{D}}
\Mdef{\bolda}{\mathbf{a}}
\Mdef{\boldb}{\mathbf{b}}
\Mdef{\bfD}{\mathbf{D}}
\Mdef{\fm}{\frak{m}}
\Mdef{\fp}{\frak{p}}
\Mdef{\eps}{\epsilon}
\newcommand{\cell}{\mathrm{cell}}
\newcommand{\shift}{\mathrm{shift}}
\newcommand{\Ftwo}{\mathbb{F}_2}
\newcommand{\pifinite}{$\pi_*$-finite}
\newcommand{\Ca}{C}
\newcommand{\Ba}{B}
\newcommand{\TbC}{\Tb C}
\newcommand{\TbB}{\Tb B}
\begin{document}
\title{Ausoni-B\"okstedt duality for topological Hochschild homology}
\author{J.P.C.Greenlees}
\address{School of Mathematics and Statistics, Hicks Building, 
Sheffield S3 7RH. UK.}
\email{j.greenlees@sheffield.ac.uk}
\date{}

\begin{abstract}
We consider the Gorenstein condition for topological Hochschild
homology, and show that it holds remarkably often. More precisely,
if $R$ is a commutative ring spectrum and $R\lra k$ is a map to a 
field of characteristic $p$ then, provided $k$ is small as an
$R$-module,  $THH(R;k)$ is Gorenstein
in the sense of \cite{DGI1}. In particular, this holds if  $R$ is a
(conventional) regular local ring with residue field $k$ of
characteristic $p$. 

Using only B\"okstedt's calculation of 
$THH(k)$, this gives a non-calculational proof of dualities observed
by  B\"okstedt \cite{Bok} and Ausoni \cite{Ausoni},
Lindenstrauss-Madsen \cite{LM},   Angeltweit-Rognes \cite{AR}
and others.    
\end{abstract}

\maketitle

\tableofcontents


\section{Introduction}

The present work was stimulated by calculations of topological
Hochschild homology by B\"okstedt \cite{Bok} and Ausoni
\cite{Ausoni}.  Given a map of commutative ring spectra $R\lra k$, we may view
$k$ as an $R$-bimodule and hence define $THH_*(R;k)$. Identifying
conventional rings with Eilenberg-MacLane ring spectra, we may take
$k$ to be a field, and the calculations give striking examples where  
$THH_*(R;k)$ is Gorenstein. These and other calculations are summarized in  Section \ref{sec:egs},
and the reader unfamiliar with them may wish to glance at them before
proceeding. 

The purpose of the present paper is to give a non-calculational
explanation of this duality. Most are instances of the following result
which covers many cases where there is currently no complete
calculation. The basic definitions of $THH$ are described in Section 
\ref{sec:HH}  and the Gorenstein apparatus is described in Section 
\ref{sec:Gor}. The following theorem appears below as Corollary
\ref{cor:gor}.

\begin{thm} 
If $R$ is a connective commutative ring spectrum with a map $R\lra k$ where $k$ is
a field of characteristic $p>0$ then provided (i) $k$ is small as a
$R$-module and (ii) $R$ is Gorenstein of shift $a$ then $THH(R; k)$ 
is Gorenstein of shift $-a-3$ and has Noetherian homotopy groups. 
\end{thm}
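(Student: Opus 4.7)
The strategy is to use a Koszul/Morita-type duality between $R\otimes R$-modules and $THH(R;k)$-modules to translate the Gorenstein condition on $R\to k$ (shift $a$) to one on $THH(R;k)\to k$ (shift $-a-3$), where the universal ``$-3$'' correction comes from B\"okstedt's identification of $THH(k)$.

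The base case $R=k$ (so $a=0$) establishes the correction. B\"okstedt gives $\pi_* THH(k)\cong k[x]$ with $|x|=2$, and the two-term Koszul resolution $\Sigma^2 k[x]\xrightarrow{\cdot x} k[x]\to k$ makes $k$ small over $k[x]$ and computes $\mathbf{R}\Hom_{k[x]}(k,k[x])\simeq\Sigma^{-3}k$, confirming shift $-3=-0-3$. For the Noetherian conclusion in general, I would use the Tor spectral sequence $\Tor^{\pi_* R}_{*,*}(k,\pi_* THH(R))\Rightarrow \pi_* THH(R;k)$ coming from $THH(R;k)\simeq k\otimes_R^L THH(R)$; smallness of $k$ over $R$ and standard B\"okstedt-type bounds on $\pi_* THH(R)$ relative to $\pi_* R$ give finitely-generated input in each degree, transferring Noetherianness to the output graded ring.

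For the Gorenstein shift in general, I would factor $R\otimes R\xrightarrow{\mu} R\to k$ and apply the Dwyer--Greenlees--Iyengar composition principle \cite{DGI1}. The multiplication $\mu$ is shown to carry a relative Gorenstein shift of $+3$ by base-changing the base case $k\otimes k\to k$ along $R\otimes R\to k\otimes k$ in the proxy-small sense. Composing with the hypothesized shift $a$ for $R\to k$ then yields $R\otimes R\to k$ Gorenstein of shift $a+3$. Finally, a Morita/Koszul duality between $R\otimes R$-modules supported at $k$ and $THH(R;k)$-modules, using $THH(R;k)\simeq k\otimes^L_{R\otimes R} R$, flips the sign and produces $THH(R;k)\to k$ Gorenstein of shift $-(a+3)=-a-3$.

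The hard part will be sign-tracking through the Koszul duality and verifying the composition of Gorenstein shifts in the proxy-small regime, since $k$ is typically not small over $R\otimes R$ (only over $R$, and a posteriori over $THH(R;k)$). Working within the proxy-small framework of \cite{DGI1}, with its cellularization and the associated local-cohomology/local-homology duality, is essential; the Matlis-style sign flip converting $a+3$ into $-a-3$ is the technical heart of the argument.
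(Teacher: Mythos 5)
Your proposal takes a genuinely different route from the paper, and unfortunately the route has gaps that I do not think can be filled as described.

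The paper's actual argument avoids $R^e=R\otimes_{\bS}R$ entirely. It uses a cofibre sequence due to Dundas (Lemma \ref{lem:cofibresequence}): applied to $C\lra k\lra A$ (with $A=k\otimes_C k$), it produces a cofibre sequence of $k$-algebra spectra $A\lra THH(C;k)\lra THH(k)$. Since $k$ is small over $C$ and $C$ is Gorenstein of shift $a$, the ring $A$ has finite-dimensional homotopy and is Gorenstein of shift $-a$; B\"okstedt gives $THH(k)$ Gorenstein of shift $-3$ with homotopy $k[\mu_2]$; and the characteristic-$p$ hypothesis is used through Lemma \ref{lem:charppifinite} to verify the \pifinite\ condition needed for the Gorenstein ascent theorem (Corollary \ref{cor:GorascentNoeth}). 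Ascent then adds the shifts: $(-a)+(-3)=-a-3$. The role of characteristic $p$ — it forces $p$-th powers to survive the spectral sequence and hence makes the cofibration \pifinite\ — is absent from your proposal, and without it the ascent machinery does not apply; this is a red flag that the hypothesis is not being used where it must be.

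The central gap in your proposal is the two-step claim that (i) $\mu\colon R^e\lra R$ is relatively Gorenstein of shift $+3$ by base-changing the case $R=k$, and (ii) a Morita/Koszul duality between $R^e$-modules and $THH(R;k)$-modules flips the sign. Neither step works as written. For (i), even in the base case $R=k$ there is no relative Gorenstein statement of shift $3$ for $k^e=k\otimes_{\bS}k\lra k$ available: $\pi_*(k^e)$ is the (enormous) dual Steenrod algebra, $k$ is not small over $k^e$, and it is far from clear that $\Hom_{k^e}(k,k^e)$ is $\Sigma^3 k$. What B\"okstedt actually supplies is that $THH(k)=k\otimes_{k^e}k$ has homotopy $k[\mu_2]$; it does not supply a relative Gorenstein shift for $\mu$. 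For (ii), $THH(R;k)=k\otimes_{R^e}R$ is not the Koszul dual $\Hom_{R^e}(k,k)$ of $R^e$, so there is no standard Morita equivalence to invoke. The analogue you have in mind does hold in the \emph{algebraic} (over $k$) setting — see Appendix \ref{sec:NAR1}, Kontsevich duality — but there the shift is $a$, with no $\pm 3$, and the argument relies on the bimodule splitting $R^e\cong R\otimes_k R$ and smallness of $k$ over $R$, none of which transfers to the topological $R^e=R\otimes_{\bS}R$. Finally, your route to Noetherianness via a Tor spectral sequence $\Tor^{\pi_*R}(k,\pi_*THH(R))$ is not robust: $\pi_*THH(R)$ for e.g.\ $R=\Z$ is already complicated torsion, and no ``B\"okstedt-type bound'' forces the output to be Noetherian. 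In the paper this comes for free from the ascent spectral sequence for $A\lra THH(C;k)\lra THH(k)$, whose $E^2$-page $k[\mu_2]\otimes_k\pi_*A$ is manifestly Noetherian when $\pi_*A$ is finite-dimensional.
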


This should be contrasted with the algebraic case (i.e., working under
$k$ rather than under the sphere spectrum). In this case, if $R$ is a
$k$-algebra, under the same hypotheses we expect $HH_*(R|k;k)$ to 
be Gorenstein a shift of $-a$ (see Remark \ref{rem:overk} and 
Appendix \ref{sec:NAR1}).

It is essential to the argument that we are working in characteristic
$p$,  and the  only calculational input is B\"okstedt's result that  
$THH_*(\Fp)=\Fp [\mu_2]$ (this is Gorenstein of shift  $-3$, which
explains the $-3$  in the statement of the theorem). The two
technical ingredients are  (A) a cofibre sequence
conjectured to explain the Gorenstein calculations and proved by
Dundas and (B) an extension of the usual Gorenstein
ascent  theorem.

There is a strong precedent for calculations based on Gorenstein ascent.  Indeed
if $S\lra R \lra Q$ is a cofibre sequence (i.e., $Q\simeq
R\tensor_Sk$)\footnote{These are often called {\em fibre} sequences in
the algebra literature because of the fact that cochains on topological fibre
sequences give examples, but this would lead to confusion in the
present context.},  the Gorenstein property often behaves well 
in the sense that if $S$ and $Q$ are Gorenstein then so is $R$. 
To illustrate its use we show how this, together with Morita
invariance,  lets us generate most Gorenstein rings from from
0-dimensional ones.  To start with,  exterior algebras $E$ are Poincar\'e
duality algebras (and hence 0-dimensional Gorenstein rings) and since any 
polynomial algebra $P$ is Morita equivalent to an exterior algebra
$E$, polynomial algebras are Gorenstein. Next, by Noether normalization 
any Noetherian $k$-algebra  $R$ is finitely generated as a module over a polynomial
subring $P$  so that the cofibre $Q=R\tensor_Pk$ is finite 
dimensional. Since $P$ is Gorenstein, $R$ is Gorenstein if and only 
if $Q$ is a Poincar\'e duality algebra, so Gorenstein rings are constructed from a
polynomial algebra and a Poincar\'e duality algebra. Altogether,
Gorenstein rings are constructed from  Poincar\'e duality algebras using
Morita equivalences and cofibre sequences.  

Duality phenomena are also ubiquitous in topology, starting with 
Poincar\'e duality and moving on to  coefficient rings of many equivariant
cohomology theories \cite{DGI1}. Once again it seems these all come
from a rather small collection of basic examples, namely the chains on
a group or the cochains on a manifold. Using Morita equivalences and
cofibre sequences one can generate a wide variety of further examples,
perhaps most notably $C^*(BG)$ for compact Lie
groups $G$ whose adjoint representation is orientable \cite{DGI1}. The present
paper shows that B\"okstedt's calculation provides a new source of
Gorenstein examples. \\[2ex]

The rest of the paper is organized as follows. Sections \ref{sec:HH}, \ref{sec:SS} and \ref{sec:Gor} provide
summaries of relevant background. Section \ref{sec:egs} gives
summaries of various calculations from the literature. Section
\ref{sec:Gorascent} proves the Gorenstein Ascent result we need, and
 Section \ref{sec:ABduality} proves the main result. We finish in
 Section \ref{sec:egs2} by discussing the  implications of the result
 for a number of  examples. There are two appendices which 
describe similar results. In Appendix A we consider   THH of Thom spectra via the 
work of Blumberg-Cohen-Schlichtkrull \cite{BCS}, and in Appendix B
we consider algebraic Hochschild homology (i.e., under $k$  rather
than under the sphere spectrum) where a result of Dwyer-Miller gives
analogous duality statements.

During the genesis of the paper, I have discussed my speculations with
many people, and I am grateful to V.Angeltveit, C.Ausoni, D.Benson,
B.Dundas,  W.G.Dwyer and A.Lindenstrauss for their patience and sharing their ideas. I
 am particularly grateful to B.Dundas for providing a proof of the
 critical conjectured cofibration described in Lemma
 \ref{lem:cofibresequence}, and allowing me to publish it here. 
I would like to thank the University of Lille for inviting me to give
lectures on duality in  2012, when these ideas started to make progress, and 
MSRI for providing an excellent environment for completing this
account. 

\section{Hochschild homology and cohomology}
\label{sec:HH}
We suppose given maps $S\lra R\lra k$ of ring spectra; as usual we
include the case of conventional rings through the use of
Eilenberg-MacLane spectra. 
We write $R^e=R^e_S=R\tensor_SR$, and we write
$P$ for an $R^e$-module, which we refer to as an $(R,R)$-bimodule over
$S$. Thus we may talk of the Hochschild homology spectrum
$$HH_{\bullet}(R|S;P)=R\tensor_{R^e} P$$
and the Hochschild cohomology spectrum 
$$HH^{\bullet}(R|S;P)=\Hom_{R^e}(R, P). $$
The Hochschild homology and cohomology groups are obtained by taking
homotopy groups
$$HH_*(R|S;P)=\pi_*HH_{\bullet}(R|S;P) \mbox{ and }
HH^*(R|S;P)=\pi_*HH^{\bullet}(R|S;P). $$
If $R$ and $S$ are conventional rings, $R$ is flat over $S$ and $P$ is
a conventional module, this agrees with the standard definitions in
algebra. 

In the examples of most concern to us here, 
$S=\bS$ is the sphere spectrum, and we have 
we have topological Hochschild homology $THH(R;P):=HH_{\bullet}(R|\bS ;P)$.

\section{Two spectral sequences}
\label{sec:SS}
We are going to be concerned with cofibre sequences of commutative
ring spectra, $S\lra R \lra Q$, in the sense that $Q\simeq R\tensor_S k$.
In the topological context one is used to having spectral sequences as
basic calculational tools. It is convenient to have them available
more generally. 

\subsection{The connective case}

This is the situation when the ring spectra $S, R$ and $Q$ are all connective.
The first example of this is analogous to what happens when we
have a short exact sequence of compact Lie groups $1\lra
N \lra G \lra G/N\lra 1$. We take $S=C_*(N), R=C_*(G)$ and $Q=C_*(G/N)$ and in
this case we have the homological Serre spectral sequence
$$E^2_{*,*}=H_*(G/N; H_*(N))\Rightarrow H_*(G)$$
 of the fibration $N\lra G \lra G/N$.

\begin{lemma}
\label{lem:connSS}
If  $S\lra R \lra Q$ is a cofibre sequence of connective
commutative algebras augmented over $k$ and $\pi_0(S)=k$, and $R$ is 
upward finite type as an $S$ module (for example
\cite[3.13]{DGI1}  if $\pi_n(R)$ is
finite dimensional for each $n$)
 then there is a multiplicative spectral sequence
$$E^2_{s,t}=\pi_{s}(Q)\tensor_k \pi_{t}(S)\Rightarrow
\pi_{s+t}(R),  $$
with differentials
$$d^r: E^r_{s,t}\lra E^r_{s-r,t+r-1}.$$
\end{lemma}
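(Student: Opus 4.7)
The natural approach is to filter $R$ through the Postnikov tower of $S$, then base-change. Since $S$ is connective with $\pi_0 S = k$, it has a Postnikov tower of augmented commutative $S$-algebras
$$S \simeq \holim_n \tau_{\leq n} S, \qquad \ldots \to \tau_{\leq n} S \to \tau_{\leq n-1} S \to \ldots \to \tau_{\leq 0} S = k,$$
with layers the Eilenberg-MacLane spectra $\Sigma^n H\pi_n(S)$; these are $k$-modules via the further augmentation to $\tau_{\leq 0} S = k$. Applying $R \otimes_S -$ produces a tower of commutative $R$-algebras
$$R \to \ldots \to R_n := R \otimes_S \tau_{\leq n} S \to R_{n-1} \to \ldots \to R_0 = R \otimes_S k = Q,$$
whose $n$th layer is
$$R \otimes_S \Sigma^n H\pi_n(S) \simeq (R \otimes_S k) \otimes_k \Sigma^n H\pi_n(S) = \Sigma^n(Q \otimes_k H\pi_n(S)),$$
using that the layer is a $k$-module. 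Because $k$ is a field, the Künneth theorem gives $\pi_s(Q) \otimes_k \pi_n(S)$ in total degree $s + n$.

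The associated exact couple yields a spectral sequence with $E^2$-page $E^2_{s,t} = \pi_s(Q) \otimes_k \pi_t(S)$ and Serre-type differentials $d^r : E^r_{s,t} \to E^r_{s-r, t+r-1}$. The collapse $E^1 = E^2$ is automatic because each layer has its homotopy concentrated in a single $t$-row, and the bidegree of the differentials is forced by tracking total degree in the long exact sequences of the fibre sequences $R\otimes_S\Sigma^n H\pi_n(S)\to R_n\to R_{n-1}$. Multiplicativity is inherited from the fact that we have a tower of commutative $R$-algebras and the tensor product is symmetric monoidal; the product on $E^2$ is the evident product of the two tensor factors.

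The main technical step, and the place where the hypotheses enter essentially, is convergence to $\pi_*(R)$. Since $\tau_{\geq n+1} S$ is $(n+1)$-connective and $R$ is connective over the connective base $S$, the fibre $R \otimes_S \tau_{\geq n+1} S$ of $R \to R_n$ is itself $(n+1)$-connective (connectivity adds under tensor products over a connective commutative ring). Hence $R \to R_n$ is an isomorphism on $\pi_i$ for $i \leq n$, and in each fixed total degree the tower $\{\pi_*(R_n)\}$ stabilizes. The upward finite-type hypothesis on $R$ rules out Milnor $\lim^1$ obstructions, yielding the strong convergence $R \simeq \holim_n R_n$ needed to identify the $E^\infty$-page with the associated graded of $\pi_*(R)$. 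This convergence step is the hard part: without a finiteness assumption one would only get conditional convergence, and the proposed $E^\infty$ might fail to compute $\pi_*(R)$ on the nose.
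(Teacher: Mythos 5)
Your proof is correct and follows essentially the same route as the paper's: both filter via the Postnikov tower of $S$ (the paper's $S^{(n)}$ is your $\tau_{\leq n}S$), base-change along $R\otimes_S -$, use $\pi_0(S)=k$ to identify the layers with $\Sigma^t\pi_t(S)\otimes_k Q$, and then check strong convergence of the resulting exact couple. The only minor difference is in the convergence step, where you observe directly that connectivity of the layers forces the tower $\{\pi_i(R_n)\}_n$ to stabilize in each degree, whereas the paper invokes its general $\holim$--$\otimes$ interchange lemma (Lemma \ref{lem:limtensor}); your observation actually shows the upward-finite-type hypothesis is not doing any work here, so your phrasing that it is what ``rules out $\lim^1$ obstructions'' is slightly off---stability of the tower already does that.
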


\begin{proof}
We construct a tower 
$$S\lra \cdots \lra S^{(n)}\lra
S^{(n-1)} \lra \cdots \lra S^{(1)}\lra S^{(0)}=k$$
of commutative rings by killing homotopy groups, where $S\lra S^{(n)}$ is an isomorphism of
$\pi_{\leq n}$ and $\pi_i(S^{(n)})=0 $ for $i>n$. Taking $R^{(n)}=R\tensor_SS^{(n)}$ we
obtain a corresponding multiplicative filtration of $R$, and consider the resulting
spectral sequence. In view of the cofibre sequence $\Sigma^t
\pi_tS\lra S^{(t)}\lra S^{(t-1)}$ of $S$-modules it is easy to write
down an exact couple, and we grade it so that  
$$D^{2}_{s,t}=\pi_{s+t}(R\tensor_S S^{(t)}) \mbox{ and }  E^{2}_{s,t}=\pi_{s+t}(R\tensor_S \Sigma^t\pi_t(S)).  $$
Now the action of $S$ on $\pi_tS$ factors through $S^{(0)}=k$, and
hence
$$R\tensor_S \Sigma^t\pi_t(S))\simeq \Sigma^t\pi_t(S)\tensor_k Q.   $$
The $d^2$ differential is induced by 
$$R\tensor_S \Sigma^t\pi_t(S)\lra R\tensor_S S^{(t)}\lra
R\tensor_S \Sigma^{t+2}\pi_{t+1}(S).    $$

The convergence of the spectral sequence is the statement that the
natural map 
$$\kappa: R=R\tensor_S S\simeq R\tensor_S [\holim_nS^{(n)}]\lra \holim_n
[R\tensor_S S^{(n)}]$$
is an equivalence. Since the $S^{(n)}$ are uniformly bounded below (by
$-1$) the result follows (as summarized in Lemma \ref{lem:limtensor} below).
\end{proof}

\subsection{The coconnective case}
This is the situation when the ring spectra $S, R$ and $Q$ are all
coconnective. This does not play a role in our main applications and
is included for comparison. Because free commutative algebras are not
usually coconnective we will need to add a significant hypothesis. 

One example of this arises if we start from a fibration $F\lra E
\lra B$  with $B$ simply connected and take  $S=C^*(B)$, $R=C^*(E)$
and $Q=C^*(F)$. This  obviously satisfies the stringent additional
hypothesis identified below, so the construction generalizes the Serre spectral 
sequence
$$E_2^{*,*}=H^*(B; H^*(F))\Rightarrow H^*(E). $$

\begin{lemma}
\label{lem:coconnSS}
Suppose 
$S\lra R \lra Q$ is a cofibre sequence of coconnective
commutative algebras augmented over $k$ and $\pi_0(S)=k$ and $R$ is 
downward finite type as an $S$ module (for example \cite[3.14]{DGI1} if $\pi_*(S)$ is
simply coconnected (i.e., $\pi_0(S)=k$ and $\pi_{-1}(S)=0$) and
$\pi_n(R)$ is finite dimensional for each $n$).
If in addition that there is a tower
$$S\lra \cdots \lra S_{(n)}\lra
S_{(n-1)} \lra \cdots \lra S_{(1)}\lra S_{(0)}=k$$
of  coconnective commutative rings with $\pi_i(S_{(n)})=0$ for $i<n$.
 then there is a multiplicative spectral sequence
$$E_2^{s,t}=\pi_{-s}(S)\tensor_k \pi_{-t}(Q)\Rightarrow
\pi_{-s-t}(R),  $$
with differentials
$$d_r: E_r^{s,t}\lra E_r^{s-r,t+r-1}.$$
\end{lemma}

\begin{proof}
By hypothesis, there is a tower
$$S\lra \cdots \lra S_{(n)}\lra 
S_{(n-1)} \lra \cdots \lra S_{(1)}\lra S_{(0)}=k$$ 
of commutative rings. 
Thus the map  $S\lra S_{(n)}$ 
is an isomorphism of $\pi_{\geq -n}$ and we have cofibre sequences of $S$-modules
$$\Sigma^{-s}\pi_{-s}S\lra S_{(s)}\lra S_{(s-1)}.$$

We then get a spectral sequence
$$D_2^{s,t}=\pi_{-s-t}(R\tensor_S S_{(s)})
\mbox{ and } E_2^{s,t}=
\pi_{-s-t}(R\tensor_S \Sigma^{-s}\pi_{-s}S)$$
The differentials then take the familiar cohomological form
$$d_r: E_r^{s,t}\lra E_r^{s+r, t-r+1}. $$

The convergence of the spectral sequence is the statement that the
natural map 
$$R=R\tensor_S S\simeq R\tensor_S \holim_nS_{(n)}\lra \holim_n
R\tensor_S S_{(n)}$$
is an equivalence, and by Lemma \ref{lem:limtensor} below this holds if $R$ is of downward finite type
over $S$. 
\end{proof}

\section{Gorenstein ring spectra}
\label{sec:Gor}
We are considering duality phenomena modelled on those in commutative
algebra of Noetherian rings, namely those associated to Gorenstein
local rings. For ring spectra there is a corresponding development, starting
by restricting the class of rings by a finiteness condition and then
the core Gorenstein condition followed by a duality statement. 
 We recall some definitions from \cite{DGI1}. 

\subsection{Finiteness conditions}

In a triangulated category if $N$ can be {\em finitely built} from $M$ using cofibre
sequences, finite sums and retracts,  we write $M\finbuilds N$; if $N$ 
can be {\em built} from $M$ using cofibre sequences and arbitrary sums we 
write $M\builds N$.

We consider a map $R\lra k$ of rings. The terminology comes from
 the special case when $R$ is a commutative local ring with residue 
field $k$.  The first requirement is a finiteness condition, which
plays the role of the Noetherian condition from classical commutative
algebra. The Auslander-Buchsbaum-Serre theorem in commutative algebra
states that if $R$ is a Noetherian local ring, $k$ is small if and only if $R$ is
regular. This is far too strong a condition for us to assume,  but
there is a much weaker and more practical one in the same
vein. Indeed, for commutative Noetherian local rings,  we can always form the Koszul
complex $K$ associated to a finite set of generators of the maximal
ideal; this has the properties (i) $K$ is small ($R\finbuilds K$) (ii) $K$ is finitely
built from $k$ ($k\finbuilds K$) and (iii) $k$ is built from $K$
($K\builds k$). In the context of more general ring objects, 
we say $R$ is {\em proxy-regular} if there is an
$R$-module $K$ so that (i), (ii) and (iii) hold, and we think of this
as a finiteness condition playing a similar role to that of being Noetherian.

\subsection{The Gorenstein condition}
We now say that   $S\lra k$ is {\em  Gorenstein} of shift
$a$ (and write $\shift (S)=\shift (k|S)=a$) if we have an equivalence 
$$\Hom_S(k, S)\simeq \Sigma^a k $$
of $R$-modules. More generally, we say that $S\lra R$ is 
{\em relatively Gorenstein of shift $a$} (and write $\shift (R|S)=a$) if 
$$\Hom_S(R, S)\simeq \Sigma^a R. $$

Analogously to  the classical case, we are interested in proxy-regular
rings which satisfy the Gorenstein condition.

\subsection{Gorenstein duality}
Although the Gorenstein condition itself is convenient to work with, the real reason for
considering it is the duality property that it implies. 

In classical local commutative algebra the Gorenstein duality property is
that all local cohomology is in a single cohomological degree, where it is the
injective hull $I(k)$  of the residue
field. To give a formula, we write $\Gamma_{\fm}M$ for the $\fm$-power torsion in an
$R$-module $M$, and $H^*_{\fm}(M)$ for the local cohomology of
$M$, recalling Grothendieck's theorem that if $R$ is Noetherian,
$H^*_{\fm}(M)=R^*\Gamma_{\fm}(M)$. 
The Gorenstein duality statement  for a local ring of Krull dimension $r$ therefore states
$$H^*_{\frak{m}}(R)=H^r_{\frak{m}}(R)=I(k). $$
If $R$ is a $k$-algebra, $I(k)=R^{\vee}=\Gamma_{\fm} \Hom_k(R,k)$.

Turning to ring spectra, if $R$ is a $k$-algebra we may again 
define  $R^{\vee}=\cell_k(\Hom_k(R, k))$ and observe this has the Matlis lifting property
$$\Hom_R(T, R^{\vee})\simeq \Hom_k(T,k) $$
for any $T$ built from $k$. The case when $R$ is not a $k$-algebra is
more complicated, but will not be needed here. 

In particular, if $R$ is Gorenstein of shift $a$ we have equivalences
of $R$-modules
$$\Hom_R(k, \cell_kR)\simeq \Hom_R(k,R)\stackrel{(g)}\simeq \Sigma^a
k\stackrel{(m)}\simeq \Hom_R(k, \Sigma^a R^{\vee}),  $$
where the equivalence (g) is the Gorenstein property and the
equivalence (m) is the Matlis lifting property. 
We would like to remove the $\Hom_R(k, \cdot)$ to deduce
$$\cell_kR\simeq \Sigma^a R^{\vee}. $$
Such an equivalence is known as {\em Gorenstein duality},  since
$\cell_k(R)$  is a covariant functor of $R$ and $R^{\vee}$ is a contravariant functor of
$R$. 

Morita theory \cite{DGI1} says that if $R$ is proxy-regular we may make this
deduction provided $R$ is orientably Gorenstein in the sense that the right
actions of $\cE=\Hom_R(k,k)$  on $\Sigma^ak$ implied by the two
equivalences (g) and (m) agree. This is automatic when the ring
spectrum is both a $k$-algebra and connected.

\begin{prop}
Suppose $R$ is a proxy-regular, connected $k$-algebra
and $\pi_*(R)$ is Noetherian with $\pi_0(R)=k$ and 
maximal ideal $\fm$ of positive degree elements.  If $R$ is 
Gorenstein of shift $a$, then $R$ it is automatically orientable and so 
has Gorenstein duality.  Accordingly there is a local cohomology spectral sequence
$$H^*_{\fm}(R_*)\Rightarrow \Sigma^a R_*^{\vee}. $$
\end{prop}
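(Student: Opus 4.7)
The plan is to verify the hypotheses of the Morita duality machinery of \cite{DGI1} to produce the equivalence $\cell_kR\simeq \Sigma^aR^{\vee}$, and then to extract the spectral sequence from the standard local cohomology presentation of $\cell_kR$.

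First, I would show that the orientability condition holds automatically. The equivalences (g) and (m) each transport a right $\cE$-module structure onto $\Sigma^ak$; call them $\alpha$ and $\beta$. Since both transports start from equivalent objects, $(\Sigma^ak,\alpha)$ and $(\Sigma^ak,\beta)$ are abstractly isomorphic as right $\cE$-modules, and any such isomorphism is in particular an $R$-module self-equivalence of $\Sigma^ak$, so it lies in the units of $[\Sigma^ak,\Sigma^ak]_R=\pi_0(\cE)$. Connectedness of $R$ together with $\pi_0(R)=k$ forces $\pi_0(\cE)=\Hom_k(k,k)=k$, so the isomorphism is given by a scalar $\lambda\in k^{\times}$. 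The $\cE$-intertwining relation $\lambda\cdot (x\cdot_\alpha e)=(\lambda x)\cdot_\beta e$, combined with centrality of $\lambda$, forces $\alpha=\beta$, so $R$ is orientably Gorenstein. Since $R$ is proxy-regular, the Morita argument of \cite{DGI1} then upgrades the $\Hom_R(k,-)$-equivalence from (g) and (m) to a genuine equivalence
$$\cell_kR \simeq \Sigma^aR^{\vee}$$
of $R$-modules.

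Next I would identify $\cell_kR$ with an $\fm$-torsion spectrum. Noetherianness of $\pi_*R$ gives a finite generating sequence $x_1,\dots,x_n$ for $\fm$, and one forms the stable Koszul object $\Gamma_\fm R$ as the fibre of $R$ mapping into the \v{C}ech nerve of $R[x_1^{-1}],\dots,R[x_n^{-1}]$. The Koszul complex $K$ on the $x_i$ witnesses proxy-regularity with $R\finbuilds K$ and $K\builds k$, and standard arguments then show that $\Gamma_\fm R$ is $k$-cellular while $\Gamma_\fm R\to R$ remains a $\Hom_R(k,-)$-equivalence. Therefore $\Gamma_\fm R\simeq \cell_kR$.

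Finally, the finite cosimplicial \v{C}ech object defining $\Gamma_\fm R$ carries an associated homotopy spectral sequence whose $E_2$-page is the classical local cohomology $H^*_\fm$ of the Noetherian ring $\pi_*R$, giving
$$E_2^{*,*}=H^*_\fm(\pi_*R) \Rightarrow \pi_*(\Gamma_\fm R).$$
Substituting the duality equivalence $\Gamma_\fm R\simeq \cell_kR\simeq \Sigma^aR^{\vee}$ rewrites the abutment as $\Sigma^aR_*^{\vee}$, producing the claimed spectral sequence. The main obstacle is the orientability step: the argument depends crucially on pinning down $\pi_0(\cE)=k$, which uses both connectedness of $R$ and its $k$-algebra structure, and would fail without one or the other.
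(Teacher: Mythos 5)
Your plan for the second and third steps, identifying $\cell_k R$ with $\Gamma_{\fm} R$ and extracting the local cohomology spectral sequence, is sound and standard, and the Morita step is the right engine once orientability is in hand. The problem is the orientability argument in your first step. You assert that the two transported $\cE$-module structures $(\Sigma^a k,\alpha)$ and $(\Sigma^a k,\beta)$ are abstractly isomorphic as right $\cE$-modules ``since both transports start from equivalent objects.'' But all you actually know about the sources is that $\Hom_R(k,R)$ and $\Hom_R(k,\Sigma^a R^{\vee})$ are equivalent to $\Sigma^a k$ as $R$-modules; nothing a priori says they are equivalent as $\cE$-modules. Worse, modulo the Morita equivalence for proxy-regular $R$, the statement that $\Hom_R(k,\cell_k R)$ and $\Hom_R(k,\Sigma^a R^{\vee})$ are $\cE$-isomorphic is equivalent to the Gorenstein duality $\cell_k R\simeq\Sigma^a R^{\vee}$ that you are trying to prove, so the argument as written is circular. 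Your subsequent scalar and intertwining manipulation is fine and would upgrade an abstract $\cE$-isomorphism to honest equality of actions given $\pi_0(\cE)=k$, but you never establish the isomorphism in the first place.

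The paper avoids this by proving something stronger and non-circular: that $\cE$ has a unique action on $\Sigma^a k$ at all. Any such action, being $k$-linear because $R$ is a $k$-algebra, factors through the ring map $\cE=\Hom_R(k,k)\lra\Hom_k(k,k)=k$; since $\Sigma^a k$ is an Eilenberg--MacLane spectrum the action then factors through $\pi_0(\cE)$; and the degree estimate placing $\Ext^s_{R_*}(k,k)$ in degrees $\leq -s$ (using connectedness of $R$) shows $\cE$ is coconnective with $\pi_0(\cE)=k$, which can only act in the standard way. Uniqueness immediately forces the actions coming from (g) and (m) to coincide, without ever needing to first produce an isomorphism between them. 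You should replace your step 1 with this uniqueness argument.
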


\begin{proof} 
First we argue that if $R$ is Gorenstein, it is automatically
orientable. Indeed, we show that $\cE$ has a
unique action on $k$. Since $R$ is a $k$-algebra, the
action of $\cE$ on $k$ factors through 
$$\cE=\Hom_R(k,k) \lra \Hom_k(k,k)=k,$$
so since $k$ is an Eilenberg-MacLane spectrum,  
the action is through $\pi_0(\cE)$. Now we observe that since $R$ is connected, 
$\Ext_{R_*}^s(k,k)$ is in degrees $\leq -s$, so that the spectral
sequence for calculating $\pi_*(\Hom_R(k,k))$ shows $\cE$ is
coconnective with $\pi_0(\cE)=k$ which must act trivially on $k$. 
\end{proof}

We note that if the coefficient ring $\pi_*(R)$ is Gorenstein and $R$ is
connective then $R$ is Gorenstein. Indeed,  the spectral sequence 
$$\Ext_{R_*}^{*,*}(k,R_*)\Rightarrow \pi_*(\Hom_R(k,R)) $$ 
collapses, to show $\pi_*(\Hom_R(k,R)) =\Sigma^a k$ for some $a$.  The $R$-module $k$ 
is characterised by its homotopy, so $\Hom_R(k,R)\simeq \Sigma^a
k$.  Conversely, if $R$ is Gorenstein, this shows that the ring
$\pi_*(R)$ has very special properties (even if it falls short of
being Gorenstein).  The following statement corrects a typographical
error in \cite[6.2]{ringlct}. 

\begin{cor} \cite{ringlct}
Suppose $R$ has Gorenstein duality of shift $a$, that $\pi_*(R)$ is
Noetherian of Krull dimension $r$ and Hilbert series $p(t)=\sum_s\dim_k(R_s)t^s$.
\begin{enumerate}
\item If $\pi_*(R) $ is Cohen-Macaulay it is also Gorenstein, and the
Hilbert series satisfies 
$$p(1/t)=(-1)^rt^{r-a}p(t).$$

\item If $\pi_*(R) $ is almost Cohen-Macaulay it is also almost Gorenstein, and the
Hilbert series satisfies 
$$p(1/t)-(-1)^rt^{r-a}p(t)=(-1)^{r-1}(1+t)q(t) \mbox{ and } q(1/t)=(-1)^{r-1}t^{a-r+1}q(t).$$ 

In any case $\pi_*(R)$ is Gorenstein in codimension 0 and almost
Gorenstein in codimension 1. 
\end{enumerate}
\end{cor}

\subsection{The relatively Gorenstein case}

We make the  elementary observation that for any ring map $\theta : S\lra R$ 
$$\Hom_R(k,\Hom_S(R,S))\simeq \Hom_S(k,S).$$
Thus we conclude that if $S\lra R$ is relatively Gorenstein then $R$ is
Gorenstein  if and only if  $S$ is Gorenstein, and in that case 
$$\shift (k|S)=\shift (k|R)+\shift (R|S).$$

\begin{example}
\label{eg:relGorko}
 The ring map $S=ko\lra ku=R$ is relatively Gorenstein of shift
2. Indeed, the connective version of Wood's theorem states that there
is an equivalence $ku\simeq ko\sm
(S^0\cup_{\eta} e^2)$ of $ko$-modules, so that  
$$\Hom_{ko}(ku, ko)\simeq \Sigma^{-2} ku. $$
Since $ku_*=\Z [v]$ we see that $ku$  is Gorenstein of shift $-4$ over
$\Ftwo$,  and it follows that 
$ko$ is Gorenstein of shift $-6$ over $\Ftwo$. 
\end{example}

\begin{example}
\label{eg:relGortmf}
 Precisely similar statements hold for $tmf$. This is based on results of
Hopkins-Mahowald \cite{HM}, with an improved formal context of
Hill-Lawson \cite{HillLawson} giving maps in the category of  commutative $tmf$
algebras. The results about finite cell complexes are proved by Mathew
\cite{AM}. 

As background we note that at primes $p\geq 5$, we have
$tmf_*=\Z_{(p)}[c_4,c_6]$ with $c_4$ of degree 8 and $c_6$ of degree
12. It is therefore immediate from the coefficients that $tmf$ is
Gorenstein of shift $-23$ over $\Fp$. The primes 3 and 2 are more
interesting.

(i) At the prime 3, we consider the  map $tmf \lra tmf_1(2)$ of
commutative $tmf$-algebras \cite[Theorem 6.1]{HillLawson}. There is an
equivalence of $tmf$-modules  
$$tmf_1(2)\simeq tmf \sm (S^0\cup_{\alpha_1} e^{4}\cup_{\alpha_1}
e^{8})$$
 (\cite[Theorem 7.7]{AM} gives an equivalence of spectra. Writing 
$T=S^0\cup_{\alpha_1} e^{4}\cup_{\alpha_1}e^{8}$, a map $f:T\lra
tmf_1(3)$ determines a map $tmf \sm T \lra tmf_1(3)$ of
$tmf$-modules. To see that the map is an equivalence we may 
check it is an isomorphism in mod $3$ cohomology, and for this 
we only need to check it is an epimorphism in mod 3 cohomology.
Since $tmf_1(3)\simeq BP\langle 2\rangle \vee \Sigma^8 BP\langle
2\rangle$ only two generators are required over the Steenrod algebra,
and  it suffices to choose $f $ so that generators in degrees 0 and 8 
are in the image).   It follows that  
$$\Hom_{tmf}(tmf_1(2),tmf )\simeq \Sigma^{-8} tmf_1(2). $$
Since $tmf_1(2)_*=\Z_{(3)}[c_2, c_4] $ (where $|c_i|=2i$) we see that $tmf_1(2)$ is Gorenstein of shift
$-15$.  Hence we deduce by Gorenstein descent  that $tmf\lra \mathbb{F}_3 $ is
Gorenstein of shift $-23$. 

(ii) At the prime 2, we consider the map $tmf \lra tmf_1(3)$
\cite[Theorem 6.1]{HillLawson} of commutative $tmf$-algebras. Here $tmf_1(3)$ is
a form of $BP\langle 2\rangle$ (previously proved to have a
commutative model by Lawson-Naumann \cite{LawsonNaumann1, LawsonNaumann2}) and there is
an equivalence of $tmf$-module spectra 
$$tmf_1(3)\simeq tmf \sm DA(1)$$
(again, the equivalence of spectra is given in \cite[Theorem
6.6]{AM}.  A map $DA(1)\lra tmf_1(3)$ determines a map of
$tmf$-modules, and as above it suffices to show the resulting $tmf$-module map 
$tmf\sm DA(1) \lra tmf_1(3)$ is an epimorphism in mod 2
cohomology. Since the mod 2 cohomology is generated in degree 0 over
the Steenrod algebra, this is easily arranged).  It follows that  
$$\Hom_{tmf}(tmf_1(3),tmf )\simeq \Sigma^{-12} tmf_1(3). $$
Since $tmf_1(3)_*= \Z_{(2)}[\alpha_1, \alpha_3]$ (where $|\alpha_i|=2i$) we see that $tmf_1(3)$ is Gorenstein of shift
$-11$.  Hence we deduce by Gorenstein descent  that $tmf \lra \Ftwo$ is
Gorenstein of shift $-23$. 
\end{example}

In general, it can be difficult to decide if $S\lra R$ is relatively Gorenstein,
and we prefer to give conditions depending on $Q$. 

\subsection{Gorenstein Ascent}
In effect the Gorenstein Ascent theorem will state that under suitable
hypotheses  (see Section \ref{sec:Gorascent}) there is an equivalence
$$\Hom_R(k,R)\simeq \Hom_{Q}(k, \Hom_S(k,S)\tensor_k Q). $$
When this holds, it follows that if $S$ and $Q$ are Gorenstein, so is
$R$ and 
$$\shift (R)=\shift (S)+\shift (Q).$$

\subsection{Arithmetic of shifts}
We summarize the behaviour of Gorenstein shifts  in the ideal
situation when ascent and descent both hold. If all rings and maps are Gorenstein of the indicated
shifts
$$\stackrel{a} S \stackrel{\lambda}\lra 
\stackrel{b} R \stackrel{\mu}\lra 
\stackrel{c} Q$$
then $b=a+c, \lambda =-c$ and $\mu=a$

\section{Some known calculations}
\label{sec:egs}
The paper is motivated by several calculations when $S=\bbS$ is the sphere
spectrum. 

\begin{example} {\bf (The map $R=\Fp \lra \Fp=k$.)}
\label{eg1:Fp}
We consider the homotopy of  $THH(\Fp)$.  
B\"okstedt \cite{Bok} has calculated  $THH_*(\Fp )=\Fp  [\mu_{2}]$. 
The ring $THH_*(\Fp)$ is Gorenstein of shift  
$-3$. 
\end{example}

\begin{example} {\bf (The map $R=\Z \lra \Fp=k$.)}
\label{eg1:Z}
We consider the  mod $p$ homotopy of $THH(\Z)$. B\"okstedt \cite{Bok}
has calculated $THH_*(\Z;\Fp)=\Fp [\mu_{2p}]\tensor \Lambda_{\Fp} (\lambda_{2p-1})$. 
The ring  $THH_*(\Z; \Fp)$ is  Gorenstein of shift
$(-2p-1)+(2p-1)=-2$ (B\"okstedt Duality). The ring but not the shift depends on $p$.


The calculations of Lindenstrauss and Madsen \cite[4.4]{LM} have a
similar pattern. Indeed, if $\cO$ is a ring of integers in a number field, which is either
unramified or wildly ramified, $THH_*(\cO , \cO /p)$ is polynomial tensor
exterior (over $\cO /p$) on generators differing in degree by 1. In the
tamely ramified case  the ring $THH_*(\cO, \cO /p)$ is more complicated, but it is
still Gorenstein of shift $-2$. 
\end{example}

\begin{example} {\bf (The map $R=lu \lra \Fp=k$.)}
\label{eg1:lu}
We consider mod $v_1,p$ homotopy of $THH (lu)$ where $lu$ is the Adams
summand of $p$-local connective $K$-theory with coefficients
$lu_*=\Z_{(p)}[v_1]$.  McClure-Staffeldt \cite{MS} (see also
Ausoni-Rognes \cite{AR}) have calculated
  $THH_*(lu;\Fp)=\Fp [\mu_{2p^2}]\tensor \Lambda_{\Fp}
 (\lambda_{2p-1}, \lambda_{2p^2-1})$. 
The ring $THH_*(lu; \Fp)$ is Gorenstein of shift 
$(-2p^2-1)+(2p-1+2p^2-1)=2p-3$. 
\end{example}

\begin{example} {\bf (The map $R=ku \lra ku/(p,v_1)=k$.)}
\label{eg1:kukupvone}
For primes $p>2$, Ausoni calculates the mod $p,v_1$ homotopy of $THH(ku)$ 
and shows that 
$THH_*(ku;ku/(p,v_1))= \Lambda (\lambda_{2p-1})\tensor
\Fp[\mu_{2p^2}]\tensor Q, $ where $Q$ is Poincar\'e duality algebra of 
formal dimension $2p^2-1$ \cite[9.15]{Ausoni}. Although $\pi_*(ku/(p,v_1))=\Fp
[v]/(v^{p-1})$ is not a field,  we may make still consider duality
properties over $\Fp$. 
The ring  $THH_*(ku; ku/(p,v_1))$ has Gorenstein duality  
over $\Fp$ with shift  $(-2p^2-1)+(2p-1)+(2p^2-1)=2p-3$ (Ausoni Duality). This striking
example stimulated the author to investigate Gorenstein duality for $THH$. 
\end{example}

\begin{example} {\bf (The map $R=ko \lra H\Ftwo=k$.)}
\label{eg1:ko}
Angeltveit and Rognes \cite{AR} show that 
$THH_*(ko;\Ftwo )= \Lambda (\lambda_{5}, \lambda_7 )\tensor
\Ftwo [\mu_8]. $ The ring $THH_*(ko;\Ftwo)$ is Gorenstein of shift $5+7-8-1=3$. 
\end{example}

\begin{example} {\bf (The map $R=tmf \lra H\Ftwo=k$.)}
\label{eg1:tmf}
It is easily deduced from the calculations of Angeltveit and Rognes \cite{AR} that 
$THH_*(tmf;\Ftwo )= \Lambda (\lambda_9, \lambda_{13}, \lambda_{15} )\tensor
\Ftwo [\mu_{16}]. $ The ring $THH_*(tmf;\Ftwo)$  is Gorenstein of shift $9+13+15-16-1=20$. 
\end{example}

\section{Gorenstein ascent}
\label{sec:Gorascent}

We have begun to see the value of understanding the behaviour of the
Gorenstein condition in cofibre sequences, and we turn to a more
systematic discussion.

 We suppose that $S\lra R \lra Q$ is a cofibre sequence of commutative
algebras with a map to $k$, and we now consider the Gorenstein ascent
question.  When does the fact that $S$ is Gorenstein imply that $R$ is
Gorenstein? It is natural to assume that $Q$ is Gorenstein, but it is
known this is not generally sufficient. We identify a number of
circumstances in which it is sufficient, and  in characteristic $p$ we
give a useful general result. Before we do this, we look at the finiteness conditions. 

\subsection{Proxy-regularity}
We provide a tool for proxy-regular ascent. It seems that some
hypothesis is necessary and we give one in a form applying to cases of
interest here. 

First, we should introduce notation for the standard Koszul complex
associated to a sequence $r_1, r_2, \ldots , r_n$ of elements of
elements of  $\pi_*R$. For $x \in \pi_*(R)$ we define $K(R;x)$ by 
the cofibre sequence
$$R\stackrel{x} \lra R \lra K(R;x), $$
and now we take
$$K(R;r_1, \ldots , r_n)=K(R;r_1)\tensor_R \cdots \tensor_R K(R;r_n). $$
In the usual way, a concrete realization requires the choice of
specific cocycle representatives, but the homotopy type does not
depend on these choices. If $R$ is a classical ring, this gives the
standard construction  $K(R;x)=[R\stackrel{x}\lra R]$ with the copies
of $R$ in degrees 0 and 1.

\begin{lemma}
\label{lem:proxyascent}
Suppose that $S$ is proxy-regular with Koszul complex $K_S$ and that $Q$ has a Koszul complex of
the special form $K_Q=K(Q; q_1, \ldots , q_n)$ where $q_i\in \pi_*(Q)$ lifts to $r_i
\in \pi_*(R)$ for $i=1, \ldots , n$. Then $R$ is proxy-regular with Koszul complex
$K_R:=K_S\tensor_S K(R;r_1, \ldots, r_n)$. 
\end{lemma}
\begin{proof} There are three things to prove.

 Since $S\finbuilds K_S$ and $R\finbuilds K(R;r_1, \ldots, r_n)$, it
follows that 
$$R=S\tensor_SR\finbuilds K_S\tensor_S K(R;r_1, \ldots, r_n)=K_R.$$ 

 Since $Q\simeq k\tensor_SR$, we find firstly 
$$k\finbuilds K_Q= k\tensor_S K(R;r_1, \ldots , r_n)\finbuilds
K_S\tensor_S K(R;r_1, \ldots , r_n)=K_R$$
and secondly 
$$K_R=K_S\tensor_SK(R; r_1, \ldots , r_n) \builds k\tensor_SK(R; r_1, \ldots
, r_n) =K_Q\builds k .$$
This completes the proof. 
\end{proof}

\subsection{Good approximation implies ascent}

The core of our results about ascent come  from \cite{DGI1}. Indeed, 
the proof of \cite[8.6]{DGI1} gives a sufficient condition for Gorenstein
ascent in the commutative context. 

\begin{lemma}
\label{lem:Gorascent}
If $S$ and $R$ are commutative and the natural map $\nu:
\Hom_S(k,S)\tensor_S R\lra  \Hom_S(k,R)$ 
is an equivalence then 
$$\Hom_R(k,R)\simeq \Hom_{Q}(k, Hom_S(k,S)\tensor_k Q). $$
In this case, if  $S$ and $Q$
are Gorenstein, so is $R$, and the shifts add up: $\shift (R)=\shift
(S)+\shift (Q)$. \qqed
\end{lemma}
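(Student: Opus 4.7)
The plan is to compute $\Hom_R(k,R)$ by successively applying the change of rings adjunctions associated with $S\lra R\lra Q$, using the hypothesis on $\nu$ as the key middle step.

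First I would use the adjunction associated to $R\lra Q$. Since $k$ is an $R$-module via the composite $R\lra Q\lra k$, that is, it is the restriction to $R$ of a $Q$-module, the standard tensor-hom adjunction gives
$$\Hom_R(k,R)\simeq \Hom_Q(k,\Hom_R(Q,R)).$$
Next, using the cofibre sequence identification $Q\simeq R\tensor_S k$, the analogous adjunction for $S\lra R$ yields
$$\Hom_R(Q,R)=\Hom_R(R\tensor_S k, R)\simeq \Hom_S(k, R).$$
At this point the hypothesis enters: $\nu$ provides $\Hom_S(k,R)\simeq \Hom_S(k,S)\tensor_S R$.

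The last step is a base-change identity. Because $k$ is a commutative $S$-algebra and $k$ is the source of the hom, the spectrum $\Hom_S(k,S)$ carries a natural $k$-module structure extending its $S$-action through $S\lra k$; consequently
$$\Hom_S(k,S)\tensor_S R\simeq \Hom_S(k,S)\tensor_k (k\tensor_S R)\simeq \Hom_S(k,S)\tensor_k Q.$$
Stringing the four equivalences together delivers the displayed formula
$$\Hom_R(k,R)\simeq \Hom_Q(k,\Hom_S(k,S)\tensor_k Q).$$

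For the Gorenstein conclusion, I would substitute the hypotheses $\Hom_S(k,S)\simeq \Sigma^{\shift(S)}k$ and $\Hom_Q(k,Q)\simeq \Sigma^{\shift(Q)}k$ into the right hand side to obtain
$$\Hom_R(k,R)\simeq \Hom_Q(k,\Sigma^{\shift(S)}Q)\simeq \Sigma^{\shift(S)+\shift(Q)}k,$$
proving $R$ is Gorenstein and the shifts add. The only genuinely non-formal ingredient is the hypothesis on $\nu$; the other manipulations are routine adjunctions plus the compatibility of the $k$-module structure on $\Hom_S(k,S)$ with the $S$-action, which is standard in the setting of commutative ring spectra. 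The main point to verify carefully is that this $k$-linearity does not require any additional smallness assumption beyond what is available, but since $k$ is a commutative $S$-algebra it is automatic.
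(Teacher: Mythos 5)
Your proof is correct, and it reconstructs what the paper delegates to \cite[\S 8]{DGI1}: the two tensor-hom adjunctions collapsing $\Hom_R(k,R)$ to $\Hom_Q(k,\Hom_S(k,R))$, the hypothesis on $\nu$, and the $k$-linearity of $\Hom_S(k,S)$ (using commutativity of $S$, so the source and target $S$-actions agree) to pass from $\tensor_S R$ to $\tensor_k Q$. This is essentially the same route as the cited reference, so no further comparison is needed.
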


Now that we have a sufficient condition for Gorenstein ascent, we want
to identify cases in which  $\nu$ is an equivalence. 
The most familiar case is when $R$ is small over $S$ (or
equivalently, when $Q$ is finitely built from $k$). 
We emphasize that the hypothesis on $\nu$ in Lemma \ref{lem:Gorascent}
only depends on $R$ as a {\em module} over $S$, and we will obtain a
useful generalization by approximating $R$ by $S$-modules for which 
$\nu$ is an equivalence. The approximation will be as an inverse
limit, and to see the approximation is accurate we need to impose
hypotheses to ensure inverse limits and tensor products commute. 

\begin{lemma}
\label{lem:limtensor}
Suppose $M$ and $N$ are $S$-modules and $N\simeq \holim_n N_n$, and
consider the natural map 
$$\kappa: M\tensor_S [\holim_n N_n]\lra \holim_n[M\tensor_S N_n].$$

The map $\kappa $ is an equivalence in either of the following
circumstances
\begin{itemize}
\item $S$ is connective, $M$ is of upward finite type and the modules
$N_n$ are uniformly bounded below. The hypothesis on $M$ holds if
$\pi_0(S)=k$, 
$\pi_*(M)$ is bounded below and $\pi_n(M)$ is finite dimensional over
$k$ for all $n$.
\item $S$ is coconnective, $M$ is of downward finite type and the modules
$N_n$ are uniformly bounded above. The hypothesis on $M$ holds if
$S$ is simply coconnected,  $\pi_*(M)$ is bounded above and $\pi_n(M)$ is finite dimensional over
$k$ for all $n$.
\end{itemize}
\end{lemma}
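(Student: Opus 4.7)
The plan is to reduce both cases to the situation when $M$ is a \emph{small} $S$-module, where the statement is straightforward, and then use the finiteness hypotheses on $M$ together with the uniform boundedness of the tower $(N_n)$ to control the error terms.

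Let $\cC$ denote the class of $S$-modules $M$ for which $\kappa$ is an equivalence for every admissible tower $(N_n)$. Because both source and target of $\kappa$ are exact functors of $M$, the class $\cC$ is closed under cofibre sequences, finite sums and retracts; since it contains $S$ (where $\kappa$ is the identity), it contains every $S$-module finitely built from $S$, i.e.\ every small $S$-module.

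For the connective case, I would next exploit the finite type hypothesis to write $M\simeq \hocolim_j M^{(j)}$ as a sequential colimit of small $S$-modules, chosen so that $M^{(j)}\to M$ has $j$-connected cofibre. (Under $\pi_0(S)=k$ with $\pi_n(M)$ finite over $k$ this is built by attaching finitely many free cells $\Sigma^n S$ in each dimension, starting from the lower bound on $\pi_*(M)$.) If the $N_n$ are uniformly $b$-connected, then because $S$ is connective, tensoring a $j$-connected map with $N_n$ preserves $(j+b)$-connectivity, \emph{uniformly in $n$}. Homotopy limits preserve high connectivity (the Milnor $\lim^1$ loses at most one degree), so both vertical maps in the naturality square
\begin{equation*}
\begin{array}{ccc}
M^{(j)}\tensor_S\holim_n N_n & \lra & \holim_n(M^{(j)}\tensor_S N_n)\\
\downarrow & & \downarrow\\
M\tensor_S\holim_n N_n & \lra & \holim_n(M\tensor_S N_n)
\end{array}
\end{equation*}
have cofibres which become arbitrarily highly connected as $j\to\infty$. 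Since the top horizontal is an equivalence by the previous paragraph, the bottom one is an equivalence in every fixed degree for all sufficiently large $j$, hence is an equivalence.

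For the coconnective case I would argue dually. The hypotheses $\pi_0(S)=k$, $\pi_{-1}(S)=0$ and $\pi_*(M)$ bounded above with finite dimensional homotopy groups allow one to realise $M\simeq\holim_j M_{(j)}$ as a tower of small $S$-modules whose fibres are increasingly coconnective (a Postnikov-style decomposition obtained by cokilling homotopy from the top down, as in the construction of the towers $S^{(n)}$ and $S_{(n)}$ used in Lemmas~\ref{lem:connSS} and~\ref{lem:coconnSS}). Tensoring a highly coconnective map with the uniformly bounded-above modules $N_n$ again controls the error uniformly in $n$, and the same four-term diagram argument goes through. The main obstacle is the bookkeeping in this second case: one must verify that the tower $M_{(j)}$ exists with the required connectivity of fibres (which is where $\pi_{-1}(S)=0$ enters, ensuring the Postnikov tower of $S$ behaves like the connective one) and that the target of $\kappa$, which is a $\holim$ of a $\holim$, loses only a bounded amount of coconnectivity from the two $\lim^1$ contributions — so that for a fixed degree $d$ and $j$ large, the vertical maps are $d$-equivalences and reduce us to the small case.
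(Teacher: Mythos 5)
Your argument is correct and follows the same route the paper takes, which is essentially to cite the notion of upward (resp.\ downward) finite type from \cite{DGI1} and assert that together with the uniform bound on the tower it forces $\kappa$ to be an isomorphism in each degree. You have simply unpacked what ``the limit is achieved in each degree'' means --- reduction to small modules, a finite-cell approximation of $M$, and a connectivity estimate for the vertical maps in the naturality square --- so there is no substantive difference beyond the level of detail.
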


\begin{proof}
In the first part the fact that $M$ is of upward finite type and the $N_n$
are uniformly bounded below is enough to see that the limit is achieved
in each degree.  It is proved as \cite[3.13]{DGI1} that the homotopy
level condition ensures $M$ is of upward finite type. 

The proof of the second part is precisely similar, with a reference to
\cite[3.14]{DGI1}. 
\end{proof}

\begin{lemma}
\label{lem:Gorascentlim}
Suppose that $\pi_*(S)$ is  Noetherian and that 
 $\pi_*(\Hom_S(k,S))$ is a finitely generated module over $\pi_*(S)$
 and that $R\simeq \ilim_n R_n$ for small $S$-modules $R_n$.
The hypothesis  of Lemma \ref{lem:Gorascent} applies in either of the following
circumstances
\begin{itemize}
\item $S$ is connected and the $R_n$ are uniformly bounded below
\item $S$ is simply coconnected and the $R_n$ are uniformly bounded above
\end{itemize}
In this case, 
$$\Hom_R(k,R)\simeq \Hom_{Q}(k, Hom_S(k,S)\tensor_k Q) $$
and Gorenstein ascent holds for the cofibre sequence $S\lra R\lra Q$.    
\end{lemma}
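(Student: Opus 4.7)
The plan is to verify the hypothesis of Lemma \ref{lem:Gorascent} by approximating $R$ with the small $S$-modules $R_n$ and passing to the homotopy inverse limit. Write $\nu_M : \Hom_S(k,S)\tensor_S M \lra \Hom_S(k,M)$ for the natural evaluation map; the goal is to show that $\nu_R$ is an equivalence.

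First I would observe that $\nu_{R_n}$ is an equivalence for every $n$. Indeed, the full subcategory of $S$-modules $M$ on which $\nu_M$ is an equivalence contains $S$ tautologically and is closed under finite sums, retracts, suspensions and cofibre sequences; it therefore contains every small $S$-module, and in particular each $R_n$.

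Next I would pass to the homotopy inverse limit. Since $\Hom_S(k,-)$ preserves homotopy inverse limits in the second variable, the equivalence $R\simeq \holim_n R_n$ gives
$$\Hom_S(k,R)\simeq \holim_n \Hom_S(k,R_n).$$
For the source I would invoke Lemma \ref{lem:limtensor} with $M=\Hom_S(k,S)$ and $N_n=R_n$. In the connective case, the hypotheses $\pi_0(S)=k$, $\pi_*(S)$ Noetherian, and $S$ connected force $\pi_*(S)$ to be finite-dimensional in each degree and bounded below; the finite generation of $\pi_*(\Hom_S(k,S))$ over $\pi_*(S)$ then shows $\Hom_S(k,S)$ is of upward finite type. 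With the $R_n$ uniformly bounded below by hypothesis, Lemma \ref{lem:limtensor} yields
$$\Hom_S(k,S)\tensor_S R \simeq \holim_n\bigl[\Hom_S(k,S)\tensor_S R_n\bigr].$$
The simply-coconnected case is identical with ``upward'' and ``below'' replaced by ``downward'' and ``above''. Assembling the two identifications exhibits $\nu_R$ as the homotopy inverse limit of the equivalences $\nu_{R_n}$, hence as an equivalence; Lemma \ref{lem:Gorascent} then supplies the identification of $\Hom_R(k,R)$ and the Gorenstein ascent conclusion.

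The main obstacle is verifying the finite-type hypothesis on $\Hom_S(k,S)$ needed to feed Lemma \ref{lem:limtensor}: one must carefully match the direction of boundedness (upward versus downward) to the connected or coconnected character of $S$, and extract finite-dimensionality of each homotopy group of $\Hom_S(k,S)$ from the Noetherian and finite-generation hypotheses rather than assuming it outright. Once this bookkeeping is done, the remainder is formal manipulation of homotopy limits and the previously established Gorenstein ascent result.
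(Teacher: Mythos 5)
Your proposal is correct and follows essentially the same route as the paper's proof: identify $\Hom_S(k,R)$ with $\holim_n\Hom_S(k,R_n)$, use smallness of $R_n$ to replace each term by $\Hom_S(k,S)\tensor_S R_n$, and then invoke Lemma \ref{lem:limtensor} with $M=\Hom_S(k,S)$ to commute the tensor with the inverse limit. Your extra remarks checking that the Noetherian and finite-generation hypotheses deliver the upward/downward finite-type condition on $\Hom_S(k,S)$ are a slightly more explicit version of what the paper leaves implicit, but the argument is the same.
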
 

\begin{proof}
First, 
$$\Hom_S(k, R) \simeq \Hom_S(k, \holim_n R_n) \simeq \holim_n
\Hom_S(k,  R_n).  $$ 
Now, since $R_n$ is a small $S$-module,  $\Hom_S(k,R_n)\simeq \Hom_S(k,S)\tensor_S R_n$.

It therefore remains to show that the natural map 
$$\kappa: M\tensor_S [\holim_n R_n] \lra \holim_n[M\tensor_S R_n] $$
is an equivalence when $M=\Hom_S(k,S)$ so the conclusion follows from 
Lemma \ref{lem:limtensor}. 
\end{proof}

\subsection{Building good approximations}
We give criteria under which  $R$ may be approximated in this
way. First, we assume that $R$ is a $k$-algebra, and  it is convenient to
introduce some further terminology.

\begin{defn}
We say that a map $R \lra Q$ is {\em \pifinite\ } if $\pi_*(Q)$ is
finitely generated as a module over  $k [x_1, \ldots , x_n]$ for some
finite set of elements $x_1, \ldots, x_n$ of $ \pi_*(R)$. A cofibration sequence $S\lra R
\lra Q$ is {\em \pifinite } if the map $R\lra Q$ is \pifinite .
\end{defn}

\begin{remark} 
 If
$\pi_*(R)$ is Noetherian, it is equivalent to ask that $\pi_*(Q)$ is
finitely generated over $\pi_*(R)$. 
\end{remark}

\begin{prop}
\label{prop:GorascentNoeth}
Suppose that $\pi_*(S)$ is  Noetherian,  $\pi_*(R)$ and 
$\pi_*(\Hom_S(k,S))$ are  finitely generated $\pi_*(S)$-modules, $R$
is a $k$-algebra and the cofibration is \pifinite , and suppose that either (i) $S, R$ and
$Q$ are all  connected or (ii) that $S, R$ and $Q$ are all  coconnected and $S$ is
simply  coconnected.  Under these conditions, 
$$\Hom_R(k,R)\simeq \Hom_{Q}(k, Hom_S(k,S)\tensor_k Q) $$
and Gorenstein ascent holds for the cofibre sequence $S\lra R\lra Q$.    
\end{prop}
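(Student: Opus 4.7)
The plan is to deduce the claim from Lemma~\ref{lem:Gorascentlim}, which reduces Gorenstein ascent to the problem of exhibiting $R$ as a homotopy inverse limit of small $S$-modules with uniform (co)connectivity bounds. For the approximation, in case (i) I would use the Postnikov-style tower of Lemma~\ref{lem:connSS}, setting $R^{(n)}=R\tensor_S S^{(n)}$ where $S^{(n)}$ is the $n$-th Postnikov section of $S$. Case (ii) is handled by the dual construction based on Lemma~\ref{lem:coconnSS}.

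First I would establish the necessary finite type. Since $\pi_*(S)$ is graded Noetherian with $\pi_0(S)=k$ a field, the augmentation ideal is finitely generated and $\pi_*(S)$ is itself finitely generated as a graded $k$-algebra, so each $\pi_i(S)$ is finite dimensional over $k$. The finite generation of $\pi_*(R)$ over $\pi_*(S)$ then forces each $\pi_i(R)$ to be finite dimensional over $k$, so $R$ is of upward finite type over $S$ in case (i) (and, by the symmetric argument using $\pi_{-1}(S)=0$, of downward finite type in case (ii)). Lemma~\ref{lem:limtensor} then gives $R\simeq\holim_n R^{(n)}$, and the $R^{(n)}$ are uniformly bounded below (respectively above) as required.

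The main step, and the principal obstacle, is to verify that each $R^{(n)}$ is small over $S$. The $S^{(n)}$ themselves are small over $S$ by a cell-induction argument, since they have finite dimensional homotopy concentrated in a bounded range. Smallness of $R^{(n)}=R\tensor_S S^{(n)}$ is not formal because $R$ is typically not small over $S$. To close this gap I would use the \pifinite\ hypothesis on $S\lra R\lra Q$ together with finite generation of $\pi_*(R)$ over $\pi_*(S)$: these ensure that $\pi_*(R^{(n)})$ remains bounded and finite dimensional over $k$, so that a parallel cell induction exhibits $R^{(n)}$ as a finite cell $S$-module. Once smallness is secured, Lemma~\ref{lem:Gorascentlim} delivers the equivalence $\Hom_R(k,R)\simeq\Hom_Q(k,\Hom_S(k,S)\tensor_k Q)$, and Gorenstein ascent follows as in Lemma~\ref{lem:Gorascent}.
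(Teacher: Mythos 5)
Your overall strategy (approximate $R$ as a homotopy inverse limit of small $S$-modules and invoke Lemma~\ref{lem:Gorascentlim}) is the right one and is the one the paper uses, but the tower you propose does not work, and the reason is exactly the point the \pifinite\ hypothesis is there to address.

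The gap is in the claim that the Postnikov sections $S^{(n)}$, and hence $R^{(n)}=R\tensor_S S^{(n)}$, are small over $S$ ``by a cell-induction argument, since they have finite dimensional homotopy concentrated in a bounded range.'' This inference is false. Having finitely many finite-dimensional homotopy groups says nothing about smallness as an $S$-module: the zeroth stage $S^{(0)}=k$ already has one-dimensional homotopy concentrated in degree zero, yet $k$ is small over $S$ precisely when $S$ is regular (this is the Auslander--Buchsbaum--Serre phenomenon recalled in Section~\ref{sec:Gor}), and regularity of $S$ is not among the hypotheses. The same objection applies at every stage: $R^{(0)}=R\tensor_S k=Q$, which of course is not small over $S$ in general. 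So the Postnikov tower of $S$, which is the right device for producing the spectral sequences of Lemmas~\ref{lem:connSS} and \ref{lem:coconnSS}, is the wrong device here, and no amount of finite-type bookkeeping rescues it.

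What the paper does instead is approximate $R$ by killing a polynomial subalgebra of $\pi_*(R)$, not by truncating $S$. The \pifinite\ hypothesis plus Noether normalization gives a graded polynomial subring of $\pi_*(R)$ over which $\pi_*(Q)$ is finitely generated; one realizes this and the subrings generated by its successive $2^m$-th powers as non-commutative $k$-algebra spectra $R(n)$ (via the James construction tensored up) mapping compatibly to $R$, and sets $R_n=R\tensor_{R(n)}k$. The crucial point is that smallness of $R_n$ over $S$ is detected on the cofibre: $k\tensor_S R_n\simeq Q\tensor_{R(n)}k=Q_n$ has finite-dimensional total homotopy, hence is finitely built from $k$, and this is what makes $R_n$ small. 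It is a finiteness condition on the quotient $Q_n$, not on $R_n$ (whose homotopy remains unbounded). Your route never produces this: you try to argue finiteness of the approximants directly, which is both false and misses the role the \pifinite\ hypothesis actually plays. Once you replace the Postnikov tower with the tower built from the $R(n)$, the rest of your outline (verify $R\simeq\holim_n R_n$ via Lemma~\ref{lem:limtensor}, then apply Lemma~\ref{lem:Gorascentlim}) goes through.
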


\begin{proof} From the \pifinite\ hypothesis, by the Noether
  normalization argument,   there is a polynomial subring $R(1)_*$ of $\pi_*(R)$ over which $\pi_*(Q)$ is finitely
  generated.
 Now  let $R(2)_*, R(3)_*, \ldots$ be the subrings
generated by the $2$nd, $4$th, $8$th .... powers of generators of $R(1)_*$.

 Next we construct  (non-commutative) $k$-algebra spectra $R(n)$ with 
$\pi_*R(n)=R(n)_*$.    For a polynomial ring on a single generator of
degree $d$, we
can consider the James construction $J_k(S^d)$ on a sphere over
$k$. This is the free associative $k$-algebra spectrum on the $d$-sphere
and has homotopy $k[X_d]$.  If $R(n)_*=k[x_1, \cdots, x_s]$, we form 
$$J_k(S^{d_1})\tensor_k \cdots \tensor_k J_k(S^{d_s})$$
where $x_i$ is of degree $d_i$.  If $A$ is a commutative $k$-algebra  then we
may construct a ring map taking $X_i$ to $x_i$ as the composite
$$J_k(S^{d_1})\tensor_k \cdots \tensor_k J_k(S^{d_s})
\lra A\tensor_k\cdots\tensor_k A\lra A. $$
The first map takes $X_i$ to $x_i$ in the $i$th factor, and 
 is a map of associative rings. The second map is multiplication in
 $A$, and this is a ring map since $A$ is commutative. 

Using these ring spectra $R(n)$,  using tensor products of the single
variable case as above, we may construct maps
$$
\diagram
\ldots  \rto &R(2) \rto &R(1) \rto&R\rto &Q
\enddiagram$$
realizing the algebras we took  in homotopy. 
Now by construction $Q_n=Q\tensor_{R(n)}k$  is finitely built from $k$
since its homotopy is a finite dimensional $k$-vector space. Since the 
polynomial generators were in increasingly large degrees, 
 $Q=\holim_n Q_n$.  Similarly, if we write $R_n=R\tensor_{R(n)}k$ 
we have $R\simeq \holim_n R_n$, and  $k\tensor_SR_n\simeq 
k\tensor_S R\tensor_{R(n)} k\simeq
Q\tensor_{R(n)}k=Q_n$. Thus we have 
sequences $S\lra R_n \lra Q_n$ and $R_n$ is small as an $S$-module. 

The result now follows from Lemma \ref{lem:Gorascentlim}. 
\end{proof}

To apply this, we first note that  in characteristic $p$
 the \pifinite\  condition is automatic when $\pi_*(S)$ is in a finite
range of degrees. 

\begin{lemma}
\label{lem:charppifinite}
Suppose  $S\lra R\lra Q$ is a cofibre sequence, either connected and
satisfying the hypotheses of Lemma \ref{lem:connSS} or coconnected and
satisfying the hypotheses of Lemma \ref{lem:coconnSS}. Suppose in
addition that the cofibre sequence is one of $k$-algebras, where $k$ is
a field of characteristic $p>0$  and $\pi_*(S)$ is Noetherian
and in a finite range of degrees, then the cofibre sequence is
\pifinite .
\end{lemma}

\begin{proof}
Using the spectral sequence of Lemma \ref{lem:connSS} or Lemma
\ref{lem:coconnSS} as appropriate we see that if $x\in \pi_*(Q)$ survives to the $r$th
page then $d_r(x^p)=0$, so that $x^p$ survives to the $(r+1)$st page. 
If $\pi_*(S)$ is in a finite range of degrees, the spectral sequence
collapses at the $N$th stage for some $N$ and the $p^{N-1}$th powers
of all elements survive, and therefore lie in the image of
$\pi_*(R)\lra \pi_*(Q)$ so that the cofibration is \pifinite . 
\end{proof} 

We may now apply  Proposition \ref{prop:GorascentNoeth} to give a
useful characteristic $p$ Gorenstein ascent theorem. 

\begin{cor}
\label{cor:GorascentNoeth}
Consider a  cofibre sequence $S\lra R\lra Q$ of $k$-algebras as in
Lemma \ref{lem:charppifinite}. 
Suppose that $\pi_*(S)$  Noetherian and either connected or simply
coconnected and $k$ is a field of of characteristic $p>0$. If $\pi_*(R)$ and 
$\pi_*(\Hom_S(k,S))$ are  finitely generated $\pi_*(S)$-modules, and 
$\pi_*(S)$ is concentrated in a finite range of degrees then 
$$\Hom_R(k,R)\simeq \Hom_{Q}(k, Hom_S(k,S)\tensor_k Q) $$
and Gorenstein ascent holds for the cofibre sequence.  
\end{cor}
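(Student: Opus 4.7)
The plan is to deduce this as an immediate consequence of Proposition \ref{prop:GorascentNoeth}, with Lemma \ref{lem:charppifinite} supplying the only hypothesis that does not appear verbatim in the statement, namely \pifiniteness\ of the cofibre sequence.

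The substantive step is to invoke Lemma \ref{lem:charppifinite}: because $k$ has characteristic $p>0$, $\pi_*(S)$ is Noetherian, and $\pi_*(S)$ lies in a finite range of degrees, the cofibre sequence $S\lra R\lra Q$ is automatically \pifinite. The mechanism is already baked into that lemma: the spectral sequence of Lemma \ref{lem:connSS} or \ref{lem:coconnSS} (as appropriate to the one-sided boundedness of $S$) collapses at some finite page, and the survival of $p$th powers then forces a sufficiently high $p$-power of every class in $\pi_*(Q)$ to lift to $\pi_*(R)$. This is the only step where the characteristic $p$ assumption is genuinely used, and is in effect the heart of the corollary.

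The remaining hypotheses of Proposition \ref{prop:GorascentNoeth} --- Noetherianness of $\pi_*(S)$, finite generation of $\pi_*(R)$ and of $\pi_*(\Hom_S(k,S))$ as $\pi_*(S)$-modules, and the $k$-algebra structure on $R$ --- are all stated in the corollary. The one piece of bookkeeping, which I expect to be the main (minor) obstacle, is the connectivity hypothesis: the corollary imposes it only on $S$, whereas the proposition requires that all of $S, R, Q$ be either connected or coconnected in a coordinated way. This propagates without extra work: since $\pi_*(S)$ is bounded (it lies in a finite range of degrees) and $\pi_*(R)$ is finitely generated over $\pi_*(S)$ with $R$ a $k$-algebra, the one-sided boundedness of $\pi_*(S)$ transfers to $\pi_*(R)$, and then $Q\simeq R\tensor_S k$ inherits it from $R$.

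With \pifiniteness\ verified via Lemma \ref{lem:charppifinite} and the connectivity hypothesis propagated to $R$ and $Q$, Proposition \ref{prop:GorascentNoeth} applies directly, yielding both the equivalence $\Hom_R(k,R)\simeq \Hom_{Q}(k, \Hom_S(k,S)\tensor_k Q)$ and Gorenstein ascent for the cofibre sequence $S\lra R\lra Q$.
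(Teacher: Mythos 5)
Your proposal is correct and follows exactly the paper's (unwritten) argument: invoke Lemma~\ref{lem:charppifinite} to obtain \pifinite ness of the cofibre sequence, then feed the resulting package of hypotheses into Proposition~\ref{prop:GorascentNoeth}. The paper gives no explicit proof of this corollary, so this is precisely the intended route. The extra paragraph you add, observing that the connectivity hypothesis of Proposition~\ref{prop:GorascentNoeth} nominally constrains all of $S$, $R$, $Q$ while the corollary only asserts it for $S$, and then deducing boundedness of $\pi_*(R)$ from finite generation over the bounded ring $\pi_*(S)$ (with $Q=R\tensor_Sk$ inheriting the relevant one-sided bound), is a sensible and correct filling of a small gap the paper leaves implicit; what the proof of Proposition~\ref{prop:GorascentNoeth} (via Lemma~\ref{lem:Gorascentlim}) actually uses is uniform boundedness of the approximations $R_n$, and your argument supplies exactly that.
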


\section{Ausoni-B\"okstedt duality. }
\label{sec:ABduality}
We now have the necessary ingredients to state and prove our duality
result. The idea is that if we are given  maps $\Ca\lra \Ba \lra k$ of
commutative ring spectra with the cofibre ring spectrum $A=\Ba \tensor_\Ca k$
  Gorenstein then (at least under some hypotheses on $k$ and $A$)
 if  $\TbB= THH(\Ba;k)$ is  Gorenstein then $\TbC=THH(\Ca;k)$ is also
 Gorenstein. Since we are deducing the domain $\TbC$  is Gorenstein from
  the fact that  $\TbB$  is Gorenstein,  we think of this as a {\em descent} theorem, even
  though the principal ingredient is an {\em ascent} theorem for a suitable
  cofibration. 

\subsection{Gorenstein descent for $THH$}

The key to method is the existence of a suitable cofibration sequence
conjectured on the basis of the examples and proved by Dundas.

\begin{lemma} (Dundas)
\label{lem:cofibresequence}
 Given a cofibre sequence 
$\Ca\lra \Ba \lra A$  of commutative ring spectra over $k$ (i.e., $\Ba$
has a map to $k$ and $A=\Ba\tensor_{\Ca} k$)
there is a cofibre sequence of commutative $k$-algebra spectra 
  $$A \lra \TbC\lra \TbB$$
where $\TbC=\Ca\tensor_{\Ca^e} k=THH(\Ca;k)$ and $\TbB=\Ba \tensor_{\Ba^e} k
=THH(\Ba; k)$.
\end{lemma}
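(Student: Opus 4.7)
The plan is to realize the claimed cofibre sequence as a homotopy pushout square
\[
\begin{array}{ccc}
A & \longrightarrow & \TbC \\
\downarrow & & \downarrow \\
k & \longrightarrow & \TbB
\end{array}
\]
of commutative $k$-algebra spectra. Since $A = B \otimes_C k$, the universal property of the pushout---applied to the augmentation $B \to k$ and the identity $k \to k$, which agree on $C$---yields a canonical augmentation $A \to k$. Composing with the unit $k \to \TbC = C \otimes_{C^e} k$ of the pushout defining $\TbC$ gives the map $A \to \TbC$, which by construction factors through $k$. As a result, the pushout along $A \to k$ simplifies:
\[
\TbC \otimes_A^{L} k \;\simeq\; \TbC \otimes_k^{L} (k \otimes_A^{L} k),
\]
and the lemma reduces to producing an equivalence $\TbB \simeq \TbC \otimes_k^{L} (k \otimes_A^{L} k)$.

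Two identities then drive the computation. The first is the chain rule
\[
\TbB \;\simeq\; \TbC \otimes_k^{L} THH(B/C; k),
\]
where $THH(B/C;k) = k \otimes^{L}_{B\otimes_C B} B$ is the relative $THH$ of $B$ over $C$. This rests on the pushout decomposition $B \otimes_{\bS} B \simeq (B \otimes_C B) \otimes_C (C \otimes_{\bS} C)$ of commutative ring spectra (itself a consequence of the universal property defining $A$), combined with a Fubini-style manipulation of the derived tensor product defining $\TbB = k \otimes^{L}_{B \otimes_{\bS} B} B$. The second is the identification
\[
THH(B/C;k) \;\simeq\; k \otimes_A^{L} k.
\]
Here one uses the pushout formula $A \otimes_k A \simeq (B \otimes_C B) \otimes_C k$ to recognize $THH(B/C;k)$ as $HH(A/k;k) = k \otimes^{L}_{A \otimes_k A} A$; a bar-complex comparison then identifies $HH(A/k;k)$ with $k \otimes_A^{L} k$ for any commutative augmented $k$-algebra $A$.

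Combining the two identities yields $\TbB \simeq \TbC \otimes_k^{L} (k \otimes_A^{L} k) \simeq \TbC \otimes_A^{L} k$, establishing the pushout square and hence the cofibre sequence. The main obstacle is controlling the Fubini-style manipulations of iterated derived pushouts in commutative ring spectra, and in particular verifying that the various augmentations and unit maps compose compatibly throughout the chain of identifications.
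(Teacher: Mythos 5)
Your argument is correct and does prove the lemma, but it takes a genuinely different route from the paper. The paper works directly with the simplicial model for $THH$: one observes that the levelwise square
$\Ba\tensor_{\bS}k\tensor_{\bS}\Ca^{\tensor n}\to k\tensor_{\bS}\Ca^{\tensor n}$ over $\Ba\tensor_{\bS}k\tensor_{\bS}\Ba^{\tensor n}\to k\tensor_{\bS}\Ba^{\tensor n}$ is a pushout of commutative rings, takes geometric realizations, and then identifies all four corners at once using the single formula $THH(\Ca;X\tensor_{\bS}Y)\simeq Y\tensor_{\Ca}X$. This immediately produces the square
$A\to\TbC$, $k\to\TbB$ as a pushout, with no Fubini-type bookkeeping. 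Your proof instead reduces the claim to showing $\TbC\tensor_A^{L}k\simeq\TbB$ and decomposes this into two derived base-change facts: the transitivity equivalence $\TbB\simeq\TbC\tensor_k THH(\Ba|\Ca;k)$ (which rests on the pushout square $\Ca\tensor_{\bS}\Ca\to\Ba\tensor_{\bS}\Ba$, $\Ca\to\Ba\tensor_{\Ca}\Ba$ and the fact that the induced $\Ba\tensor_{\Ca}\Ba$-action on $\TbC$ factors through $k$), and the identification $THH(\Ba|\Ca;k)\simeq HH_{\bullet}(A|k;k)\simeq k\tensor_A^{L}k$ via the pushout $A\tensor_k A\simeq(\Ba\tensor_{\Ca}\Ba)\tensor_{\Ca}k$ and a bar-complex comparison. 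I checked these steps and they all go through. What the paper's simplicial argument buys is that the associativity and compatibility-of-augmentations issues you flag at the end are handled automatically by the level-wise pushout; it is essentially a one-line proof once the formula for split coefficient bimodules is in hand. What your argument buys is modularity: it isolates a relative ``chain rule'' for $THH$ and the fact that $HH_{\bullet}(A|k;k)\simeq k\tensor_A k$, both of which are of independent interest and make the role of $k\tensor_A k$ conceptually visible. You should, however, carry out the compatibility checks you defer --- in particular, that the $\Ba\tensor_{\Ca}\Ba$-algebra structure on $k\tensor_{\Ba\tensor_{\bS}\Ba}(\Ba\tensor_{\Ca}\Ba)$ factors through the augmentation to $k$, which is the point that turns $\tensor_{\Ba\tensor_{\Ca}\Ba}$ into $\tensor_k$ --- since that is where an imprecise Fubini step would actually bite.
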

\begin{remark}
Dundas's lemma makes $THH( \cdot ; k)$ remarkably computable. For
example, Lindenstrauss points out that if $R$ is $k$-algebra, we may apply the 
Dundas Lemma to the cofibre sequence $k\lra R \lra R$ to deduce
$$THH_*(R;k)\cong THH_*(k)\tensor \mathrm{Tor}_*^R(k,k) .$$
In particular, this allows one to deduce from B\"okstedt's calculation
that  $THH_*(k)=k[\mu_2]$ for any field $k$ of characteristic $p$. 
\end{remark}

\begin{remark}
\label{rem:overS}
Dundas's lemma applies also to  $HH_{\bullet}( \cdot | S ; k)$ when
$S$ is less complicated than the sphere spectrum. For example we may take $S=C^*(Z)$
for a space $Z$, and suppse given a map $ Y \lla X$ of simply
connected spaces over $Z$ with fibre $F$, giving 
$$\left( C\lra B \lra A\right) =\left( C^*(Y)\lra C^*(X)\lra C^*(F) \right).$$
 We see 
$$HH_{\bullet}(C^*(Y)|C^*(Z); k)=C^*(fibre (Y\lra Y\times_ZY))$$ 
provided $Y\times_ZY$ is simply connected,  so that if $X$ also satisfies
the corresponding hypothesis, Dundas's lemma gives a cofibre sequence
$$C^*(F) \lra C^*(fibre (Y\lra Y\times_ZY)) \lra C^*(fibre (X\lra
X\times_ZX)). $$
If $Z=*$ this comes from the fibre sequence 
$$F\lla \Omega Y \lla   \Omega X $$
obtained from the Puppe sequence generated by $Y\lla X$.
\end{remark}

\begin{proof}
For a $\Ca$ bimodule $M$, according to the original
definition,  the topological Hochschild
homology  $THH(\Ca;M)$ is a realization of the Hochschild simplicial
spectrum  with $n$th term $M \tensor_{\bS} \Ca^{\tensor  n}$,
where the tensor power is for $\tensor_{\bS}$;
this is natural for maps of rings and of bimodules. In particular, the
map $\Ca\lra \Ba$ of ring spectra vertically and the map 
$\Ba \tensor_{\bS} k \lra k$ of $\Ba$-bimodules horizontally give a commutative square
$$\begin{array}{ccc}
\Ba \tensor_{\bS} k \tensor_{\bS}
\Ca^{\tensor n}&\rightarrow&k\tensor_{\bS} \Ca^{\tensor n}\\
\downarrow && \downarrow\\
\Ba \tensor_{\bS} k \tensor_{\bS}
\Ba^{\tensor n}&\rightarrow&k\tensor_{\bS} \Ba^{\tensor n}
\end{array}$$
which is evidently a pushout square of commutative ring spectra. 
 Taking geometric realizations we obtain  the pushout square
$$\begin{array}{ccc}
THH (\Ca; \Ba\tensor_{\bS}k )&\rightarrow&THH(\Ca; k)\\
\downarrow && \downarrow\\
THH (\Ba; \Ba\tensor_{\bS}k )&\rightarrow&THH(\Ba; k).
\end{array}$$
Now we use the fact that $THH$ with coefficients in a  bimodule of the
form $M=X\tensor_{\bS}Y$  simplifies:
$$THH(\Ca; X\tensor_{\bS}Y)=Y\tensor_\Ca X. $$
The pushout square now gives the required result. 
\end{proof}

We now want to take the cofibre sequence $A\lra \TbC \lra \TbB$ and deduce
that when $A$ and $\TbB$ are Gorenstein, so is $\TbC$. We need only verify 
that the hypotheses of Lemma \ref{lem:proxyascent} and
Corollary \ref{cor:GorascentNoeth} are satisfied.

\begin{thm} {\em (Gorenstein descent for THH)}
\label{thm:THHGordescent}
Suppose $\Ca\lra \Ba\lra A$ is a cofibre sequence of connective commutative ring spectra
with maps to $k$ and that 
\begin{enumerate} 
\item $A$ and $\TbB$ are proxy-regular and  Gorenstein 
\item $\pi_*(A)$ and $\pi_*(\TbB)$ are Noetherian
\item the map $\TbC \lra \TbB$ is \pifinite\
\end{enumerate} 
then $\TbC$ is proxy-regular and  Gorenstein with 
$$\shift (\TbC)=\shift (\TbB)+\shift (A).$$
\end{thm}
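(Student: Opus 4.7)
The plan is to apply the Gorenstein ascent machinery of Section~\ref{sec:Gorascent} to the cofibre sequence produced by Dundas's Lemma~\ref{lem:cofibresequence},
$$A \to \TbC \to \TbB,$$
playing the role of $S \to R \to Q$. All three spectra are connective, and since $A$ is connective and Gorenstein with $\pi_*(A)$ Noetherian and $\pi_0(A) = k$, the Gorenstein duality $\Hom_A(k,A) \simeq \Sigma^{\shift(A)} k$ forces $\pi_*(A)$ to be concentrated in a finite range of degrees, which is the key quantitative input we will need.

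Next I would follow the approximation scheme from the proof of Proposition~\ref{prop:GorascentNoeth}. The $\pi_*$-finite hypothesis~(3) yields a polynomial subring $R(1)_* = k[x_1, \ldots, x_m] \subseteq \pi_*(\TbC)$ over which $\pi_*(\TbB)$ is finitely generated. Passing to the $2^{n-1}$-th powers of these generators and realising the subrings by James-construction ring spectra $R(n)$, one obtains ring maps $R(n) \to \TbC$ and approximations $\TbC_n := \TbC \otimes_{R(n)} k$. Because the generators of $R(n)_*$ lie in degrees tending to infinity and $\TbC$ is connective, $\TbC \simeq \holim_n \TbC_n$. The crucial observation is that $k \otimes_A \TbC_n \simeq \TbB \otimes_{R(n)} k$ is finite-dimensional over $k$, and a topological Nakayama argument (using connectivity together with the finite range of $\pi_*(A)$) then shows each $\TbC_n$ is small over $A$.

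For each $n$, smallness furnishes the equivalence $\nu_n \colon \Hom_A(k,A) \otimes_A \TbC_n \simeq \Hom_A(k, \TbC_n)$ of Lemma~\ref{lem:Gorascent}. Taking $\holim_n$, Lemma~\ref{lem:limtensor} lets me commute $\Hom_A(k,A) \simeq \Sigma^{\shift(A)} k$ past the limit (it is of upward finite type and the $\TbC_n$ are uniformly connective), yielding $\nu$ for $\TbC$ itself. Lemma~\ref{lem:Gorascent} then delivers
$$\Hom_{\TbC}(k,\TbC) \simeq \Hom_{\TbB}\!\bigl(k,\, \Hom_A(k,A) \otimes_k \TbB\bigr) \simeq \Sigma^{\shift(A)+\shift(\TbB)} k,$$
which is the Gorenstein condition on $\TbC$ with the asserted shift. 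Proxy-regularity of $\TbC$ follows from Lemma~\ref{lem:proxyascent}: since $\TbB$ is proxy-regular and Noetherian it admits a Koszul complex $K(\TbB; q_1, \ldots, q_n)$ on generators of its maximal ideal, and the characteristic-$p$ trick of Lemma~\ref{lem:charppifinite} lets us replace each $q_i$ by a sufficiently high $p$-th power lifting to $\pi_*(\TbC)$ without changing the Koszul thick subcategory.

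The principal obstacle is the Nakayama step producing smallness of each $\TbC_n$ over $A$: this is where the finite range of $\pi_*(A)$, the uniform connectivity of the $\TbC_n$, and the finite-dimensionality of $\TbB \otimes_{R(n)} k$ must cooperate. A secondary delicate point is the existence of the $p$-th-power lifts needed for proxy-regular ascent, which relies on the collapse in characteristic $p$ of the spectral sequence of Section~\ref{sec:SS}, as exploited in Lemma~\ref{lem:charppifinite}.
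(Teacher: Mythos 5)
Your proposal follows the same route as the paper: invoke Dundas's Lemma~\ref{lem:cofibresequence} to produce the cofibre sequence $A \to \TbC \to \TbB$, then feed it to the Gorenstein ascent machinery (you re-derive Proposition~\ref{prop:GorascentNoeth} inline rather than simply citing it) for the Gorenstein conclusion, and to Lemma~\ref{lem:proxyascent} for proxy-regularity. The skeleton is correct, and the shift arithmetic is right.

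Two of the things you single out as the delicate points are, however, either false or not needed. First, the claim that ``connective, Noetherian, $\pi_0 = k$, Gorenstein'' forces $\pi_*(A)$ into a finite range of degrees is not true: the Eilenberg--MacLane ring spectrum of $k[x]$ with $|x|$ positive and even is connective with $\pi_0 = k$, Noetherian, and Gorenstein (a Koszul calculation gives $\Hom_{k[x]}(k,k[x]) \simeq \Sigma^{-|x|-1}k$), yet unbounded. Fortunately this is not what the Nakayama step needs. The relevant lemma---a connective $A$-module $M$ with $\pi_*(k\otimes_A M)$ finite-dimensional is small over $A$---uses only connectivity and $\pi_0(A)=k$, not any bound on $\pi_*(A)$; this is why Proposition~\ref{prop:GorascentNoeth}, which is what the paper actually invokes here, imposes no finite-range hypothesis (in contrast to Corollary~\ref{cor:GorascentNoeth}, which you do not need). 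Second, the characteristic-$p$ argument of Lemma~\ref{lem:charppifinite} is redundant inside this proof: hypothesis~(3) of the theorem assumes $\TbC\to\TbB$ is $\pi_*$-finite, which already hands you finitely many classes of $\pi_*(\TbC)$ whose images generate a Koszul complex for $\TbB$ of the form required by Lemma~\ref{lem:proxyascent}. The $p$-th-power trick is what the paper uses later (in Corollary~\ref{cor:gor}) to establish the $\pi_*$-finite hypothesis in the first place; inside Theorem~\ref{thm:THHGordescent} it is a given.
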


\begin{proof}
We consider the cofibration $A \lra \TbC \lra \TbB$ of Lemma
\ref{lem:cofibresequence}. 


By the \pifinite\ hypothesis we may choose a finite number of elements of  $\pi_*(\TbC)$ so
that $\pi_*(\TbB)$ is finitely generated over the $k$-algebra they
generate, and we may form a Koszul complex by using these
generators. This verifies the hypotheses
necessary to see that $\TbC$ is proxy-regular by Lemma
\ref{lem:proxyascent}. 

The hypotheses for Gorenstein ascent from $A$ to $\TbC$ are stated explicitly. 
\end{proof}

\begin{cor}
\label{cor:gor}
If $\Ca$ is Gorenstein of shift $a$ and augmented over a field $k$ of
characteristic $p$ and if
$\Ca$ is regular  then $THH(\Ca;k) \lra k$ is 
Gorenstein  of shift $-a-3$. 
\end{cor}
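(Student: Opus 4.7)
The plan is to apply the Gorenstein descent theorem (Theorem \ref{thm:THHGordescent}) to the cofibre sequence $\Ca \lra k \lra A$ obtained by setting $\Ba = k$ and $A = k \tensor_{\Ca} k$. Dundas's Lemma \ref{lem:cofibresequence} then supplies a cofibre sequence of commutative $k$-algebras
$$A \lra THH(\Ca;k) \lra THH(k;k) = THH(k),$$
and the task reduces to verifying the three hypotheses of Theorem \ref{thm:THHGordescent} for this sequence.

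For $\TbB = THH(k)$, B\"okstedt's calculation gives $\pi_* THH(k) = k[\mu_2]$ with $|\mu_2| = 2$, which is Noetherian, Gorenstein of shift $-3$, and proxy-regular via the Koszul complex on $\mu_2$. For $A$, regularity of $\Ca$ means $k$ is small over $\Ca$, so $A$ is small over $k$ and $\pi_*(A)$ is finite-dimensional; this immediately furnishes Noetherianness, proxy-regularity, and confinement to a finite range of degrees. The \pifinite\ condition on $\TbC \lra \TbB$ is then immediate from Lemma \ref{lem:charppifinite}, since we are in characteristic $p$.

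The one nontrivial step, and the principal obstacle, is to exhibit $A$ as Gorenstein of shift $-a$. The plan is a Koszul-duality-style computation. Smallness of $k$ over $\Ca$ together with the Gorenstein hypothesis $\Hom_{\Ca}(k,\Ca) \simeq \Sigma^a k$ yields
$$\Hom_{\Ca}(k,k) \simeq \Hom_{\Ca}(k,\Ca) \tensor_{\Ca} k \simeq \Sigma^a A,$$
while $k$-linear duality gives
$$A^{\vee} = \Hom_k(k \tensor_{\Ca} k, k) \simeq \Hom_{\Ca}(k, k).$$
Composing these as equivalences of $A$-modules produces $A^{\vee} \simeq \Sigma^a A$, a Poincar\'e duality statement, and the standard adjunction then gives
$$\Hom_A(k,A) \simeq \Sigma^{-a}\Hom_A(k,A^{\vee}) \simeq \Sigma^{-a}\Hom_k(k,k) \simeq \Sigma^{-a}k,$$
so that $A$ is Gorenstein of shift $-a$. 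The delicate point is precisely that the above identifications must track the $A$-module structures, not merely the underlying $k$-module ones.

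With all hypotheses verified, Theorem \ref{thm:THHGordescent} delivers the conclusion: $\TbC = THH(\Ca;k)$ is Gorenstein of shift $\shift(\TbB) + \shift(A) = -3 + (-a) = -a - 3$, as required.
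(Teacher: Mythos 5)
Your proposal follows the paper's proof almost exactly: both apply Theorem \ref{thm:THHGordescent} to the cofibre sequence $\Ca \lra k \lra A$ together with the Dundas cofibration $A \lra \TbC \lra \TbB$, and verify the Noetherian, proxy-regularity, Gorenstein and \pifinite\ hypotheses using the finite-dimensionality of $\pi_*(A)$ (regularity), B\"okstedt's $THH_*(k)=k[\mu_2]$, and Lemma \ref{lem:charppifinite}. The one place you depart is in showing $A$ is Gorenstein of shift $-a$: you re-derive it by hand via the chain $\Hom_\Ca(k,k) \simeq \Sigma^a A$, $A^\vee \simeq \Hom_\Ca(k,k)$, hence $A^\vee \simeq \Sigma^a A$, and then Matlis duality. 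The paper instead obtains it in one stroke by applying the Gorenstein ascent formula of Lemma \ref{lem:Gorascent} to $\Ca \lra k \lra A$ (the hypothesis on $\nu$ being automatic since $k$ is small over $\Ca$), which gives $\Hom_k(k,k)\simeq \Hom_A(k,\Sigma^a A)$ directly. Your argument is essentially an unwinding of that lemma in this special case; the point you flag yourself --- that the equivalences must be tracked as $A$-module maps, not just as identifications of underlying spectra --- is precisely what the general lemma handles, so the cleaner route is to cite Lemma \ref{lem:Gorascent} rather than reprove it.
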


\begin{proof}
We apply the theorem to the cofibre sequence $\Ca\lra k \lra A$. 
Since $\Ca$ is regular, $\pi_*A$ is finite dimensional and hence Noetherian. 
Now note that $k$ is Gorenstein of shift 0, so by Gorenstein Ascent
for $\Ca\lra k \lra A$, the ring $A$ is Gorenstein and we have $\shift
(A)=-a$.

From B\"okstedt's calculation $\TbB = THH(k)$ has Noetherian homotopy $k[\mu_2]$, so it is
Gorenstein of shift $-3$.  Now observe that by Lemma
\ref{lem:charppifinite},  the cofibration
$A\lra \TbC\lra \TbB$ is \pifinite . 
Thus the hypotheses of Theorem \ref{thm:THHGordescent} are satisfied, and we may  apply
 Gorenstein Ascent  to the cofibration $A\lra \TbC \lra \TbB$ to obtain the conclusion. 

\end{proof}

\begin{remark}
\label{rem:overk}
As in Remark \ref{rem:overS}, the same argument applies if $S$ less
complicated than the sphere spectrum. In particular if we have a map $C\lra B$ of
augmented $k$-algebras, Dundas's Lemma supplies a cofibre sequence
$$A \lra HH_{\bullet}(C|k;k)\lra HH_{\bullet}(B|k;k). $$
If $B=k$ we find 
$$k\tensor_Ck\simeq HH_{\bullet}(C|k;k). $$
For instance if $C=C^*(X)$ this corresponds to the fact that $\Omega
X$ is the fibre of the diagonal $X\lra X\times X$. 
\end{remark}

\section{Examples}
\label{sec:egs2}
We observe that Theorem \ref{thm:THHGordescent} gives  a
non-calculational proof of several of the dualities we
observed in Section \ref{sec:egs}  above, as well as giving many new
examples where the coefficient rings are not known.  In each case we specify $\Ca\lra \Ba$ and $k$ and then
discuss the resulting cofibre sequence $A\lra THH(\Ca;k)\lra
THH(\Ba;k)$. 

\subsection{Known examples revisited}
We do not add to the explicit calculations described in Section
\ref{sec:egs} above, but we emphasize that the only calculational
input is B\"okstedt's theorem. Interesting structural relationships
are highlighted by this approach.

\begin{example} {\em  (Example \ref{eg1:Fp} revisited: $\Fp$.)}
\label{eg2:Fp}
If we take $\Ca \lra \Ba$ to be $\Fp \lra \Fp$ and $k=\Fp$, we find $A=\Fp$. It is
immediate that $\Fp$ is small over $\Fp $ and 
$\Fp \lra \Fp$  is Gorenstein of shift 0. Corollary \ref{cor:gor}
shows $THH (\Fp)$ is Gorenstein of shift  $-0-3=-3$. 
\end{example}

\begin{example} {\em (Example \ref{eg1:Z} revisited: $\Z$)}
\label{eg2:Z} 
If we take $\Ca\lra \Ba$ to be $\Z\lra \Fp$ and $k=\Fp$,  we find $A
 \sim C_*(S^1)$ (where $\sim$ means that the coefficient rings are
 isomorphic).
It is immediate that $\Fp$ is small over $\Z$ and easy to check that $\Z\lra \Fp$ 
is Gorenstein of shift $-1$.  Corollary \ref{cor:gor} shows that  $THH(\Z;
\Fp)$ is Gorenstein of shift   $1+(-3)=-2$. 

The spectral sequence of Lemma \ref{lem:connSS} gives an alternative approach to the
calculational proof. The only necessary input would be to know that the differential
$d_2(\mu_2)\neq 0$. This then shows that $d_2(\mu_2^n)\neq 0$ unless
$n$ is a multiple of $p$, so that the $E_3=E_{\infty}$ term is
generated by $\mu_2^p$ (giving $\mu_{2p}$) and $\mu_2^{p-1}\tau$ 
(giving $\lambda_{2p-1}$). 

The Lindenstrauss-Madsen example in the unramified case can be treated
in the same way, since $\cO /p=k$. However, the ramified case is not
covered by our analysis since $THH_*(\cO /p)$ is not Noetherian. 
\end{example}

\begin{example}
{\em (Example \ref{eg1:lu} revisited: $lu$)}
\label{eg2:lu}
If we take $\Ca\lra \Ba$ to be $lu\lra \Z$ and $k=\Fp$,  we find $A\sim
C_*(S^{2p-1})$. It is easy to check that $\Fp$ is small over $lu$ and that $lu\lra \Fp$ 
is Gorenstein of shift $-(2p-2)-1-1=-2p$.  From B\"okstedt duality for $THH(\Z ; \Fp)$, Theorem
\ref{thm:THHGordescent} shows  $THH(lu;\Fp)$ is Gorenstein of
shift $(2p-1)+(-2)=2p-3$. 

The spectral sequence of Lemma \ref{lem:connSS} gives an alternative approach to the
proof. The only necessary input would be  to know that the differential
$d_{2p}(\mu_{2p})\neq 0$. This then shows that $d_{2p}(\mu_{2p}^n)\neq 0$ unless
$n$ is a multiple of $p$, so that the $E_{2p+1}=E_{\infty}$ term is
generated by $\mu_{2p}^p$ (giving $\mu_{2p^2}$), $\mu_{2p}^{p-1}\tau_{2p-1}$ 
(giving $\lambda_{2p^2-1}$) and $\lambda_{2p-1}$ (which survives as it
is).

Of course we get the same conclusion by taking $\Ca\lra \Ba$ to be
$lu \lra \Fp$ and $k=\Fp$. In that case we find $A\sim C_*(S^1\times S^{2p-1})$ so
that by Corollary \ref{cor:gor} $THH(lu;\Fp)$ is Gorenstein of shift $[(2p-1)+1]-3=2p-3$. 
\end{example}

\begin{example}
{\em (Example \ref{eg1:kukupvone} revisited: $ku$)}
\label{eg2:kukupvone}
If we take $\Ca\lra \Ba$ to be $ku\lra ku/(p,v_1)$ and $k=ku/(p,v_1)$,
we find $A \sim C_*(S^1\times S^{2p-1}; ku/(p,v_1))$ (a Poincar\'e duality
algebra of formal dimension $4p-4$). It is easy to see that $ku/(p,v_1)$ is small over $ku$ and $ku\lra ku/(p,v_1)$ 
is Gorenstein of shift $-2p$. 

 In order to proceed we would need to know that $ku/(p, v_1)$ has a commutative ring model and
that $THH(ku/(p,v_1))$ is Gorenstein of shift $x$. 
 We would then deduce  $THH(ku;ku/(p,v_1))$ is Gorenstein of shift $4p-4+x$. 
\end{example}

\begin{example}
{\em (Example \ref{eg1:ko} revisited: $ko$)}
\label{eg2:ko}
We take $\Ca\lra \Ba$ to be $ko\lra ku$ and $k=\Ftwo$. As in the
discussion of the relatively Gorenstein condition (Example
\ref{eg:relGorko}) Wood's Theorem gives the cofibre sequence $\Sigma ko\stackrel{\eta}
\lra ko \lra ku$, showing  that $ku$ is small over $ko$ and $A\sim C_*(S^2)$. 

Since $THH(ku; \Ftwo) $ is Gorenstein of shift 1 by Example
\ref{eg2:ku},  and the complex in Wood's Theorem is self dual of
dimension 2, Theorem \ref{thm:THHGordescent}
shows $THH(ko; \Ftwo) $ is Gorensten of shift $1+2=3$. 

More directly, using Example \ref{eg:relGorko} we could take $\Ca \lra \Ftwo$ to be $ko\lra \Ftwo$,
and apply Corollary \ref{cor:gor} to conclude that $THH(ko;\Ftwo)$ is 
Gorenstein of shift $-(-6)-3=3$
\end{example}

\begin{example}
{\em (Example \ref{eg1:tmf} revisited at $p=3$: $tmf$ localized at 3)}
\label{eg2:tmftwo}
We take $\Ca\lra \Ba$ to be $tmf\lra tmf_1(2)$ at the prime 3 and
$k={\mathbb{F}}_3$.  As in the discussion of the relatively Gorenstein
condition (Example \ref{eg:relGortmf}(i)) the $tmf$-module
$tmf_1(2)$ is $tmf$ extended by a three 
cell  complex, so it is small. We have also seen $tmf_1(2)$ is Gorenstein of shift
$-15$, so that by Theorem \ref{thm:THHGordescent} the ring $THH(tmf_1(2); {\mathbb{F}}_3)$ is Gorenstein of shift
$12$. Since the three cell complex is self-dual of dimension 8,
Theorem \ref{thm:THHGordescent} shows that  $THH(tmf; {\mathbb{F}}_3))$ is Gorenstein of
shift $20$. 

More directly, using Example \ref{eg:relGortmf} (i) we observe that since $tmf_1(2)$ is small over
$tmf$ and ${\mathbb{F}}_3$ is small over $tmf_1(2)$ then 
${\mathbb{F}}_3$ is small over $tmf$. Since $tmf$ is Gorenstein
of shift $-23$ we may apply Corollary
\ref{cor:gor} to deduce $THH(tmf; {\mathbb{F}}_3))$ is Gorenstein of
shift $20$. 
\end{example}

\begin{example}
{\em (Example \ref{eg1:tmf} revisited at $p=2$: $tmf$ localized at 2)}
\label{eg2:tmfonethree}
We take $\Ca\lra \Ba$ to be $tmf\lra tmf_1(3)$ at the prime 2 and
$k={\mathbb{F}}_2$.  As in the discussion of the relatively Gorenstein
condition (Example \ref{eg:relGortmf}(ii))  $tmf_1(3)$ is $tmf$ extended by a finite
complex, it is small. We have also seen $tmf_1(3)$ is Gorenstein of shift
$-11$, so that by Theorem \ref{thm:THHGordescent}, the ring $THH(tmf_1(3); {\mathbb{F}}_2)$ is Gorenstein of shift
$8$. Since the complex is self-dual of dimension 12, Theorem
\ref{thm:THHGordescent} shows that   $THH(tmf; {\mathbb{F}}_2))$ is Gorenstein of
shift $20$. 

More directly, using Example \ref{eg:relGortmf} (ii)  we observe that since $tmf_1(3)$ is small over
$tmf$ and ${\mathbb{F}}_2$ is small over $tmf_1(3)$ then 
${\mathbb{F}}_2$ is small over $tmf$. Since $tmf_1(3)$ is Gorenstein
of shift $-23$ we may apply Corollary
\ref{cor:gor} to deduce $THH(tmf; {\mathbb{F}}_2))$ is Gorenstein of
shift $20$. 
\end{example}

\subsection{New examples}

The possibilities are innumerable, but we select three for
illustration. 

\begin{example}
\label{eg2:ku}
{\em (Example \ref{eg1:kukupvone} revisited again: $ku$)}
If we take $\Ca\lra \Ba$ to be $ku\lra \Z$ and $k=\Fp$,  we find $A\sim C_*(S^{3})$,
and from the fact that $THH(\Z; \Fp)$ is Gorenstein of shift $-2$ we
deduce from Theorem \ref{thm:THHGordescent} that the ring $THH(ku;\Fp)$ is Gorenstein of
shift $3+(-2)=1$. 

The spectral sequence of Lemma \ref{lem:connSS} gives a calculation. If $p$ is odd, there
can be no differentials and 
$THH(ku; \Fp)=\Fp [\mu_{2p}]\tensor \Lambda (\lambda_{2p-1},
\lambda_3)$. 

If $p=2$ the only necessary input would be  to know whether the differential
$d_{4}(\mu_{4})$ is zero or not. If it is zero then 
$THH_*(ku; \Ftwo)=\Ftwo [\mu_{4}]\tensor \Lambda (\lambda_{3},
\lambda_3')$. If it is non-zero then 
$THH_*(ku; \Ftwo)=\Ftwo [\mu_{8}]\tensor \Lambda
(\lambda_{3},\lambda_7)$. The second of these is what actually
happens, as one may see from Dundas's Lemma applied to $ko\lra ku$
together with the result for $ko$ described in Example \ref{eg2:ko}. 

Of course we get the same conclusion by working with $\Ca\lra \Ba$ to be
$ku \lra \Fp$ and $k=\Fp$ we find $A\sim C_*(S^1\times S^{3})$ so
that by Corollary \ref{cor:gor} the ring $THH(ku;\Fp)$ is Gorenstein of shift $[3+1]-3=1$. 
\end{example}

\begin{example}
\label{eg2:en}
We take $\Ca \lra \Ba$ to be $e_n\lra \Fp$, where $e_n$ is the connective Lubin-Tate commutative ring spectrum with
homotopy $W({\mathbb{F}}_{p^n})[[u_1, \ldots , u_{n-1}]][u]$. From its
homotopy we see that it is Gorenstein of shift $-n-3$. From Corollary
\ref{cor:gor} we conclude $THH(e_n; \Fp)$ is Gorenstein of
shift $n$. 
\end{example}

These examples are iterable. 
\begin{example}
\label{eg2:veen}
Veen's calculation \cite{Veen} of the 
homotopy ring of the double THH of $k$, as $k[\mu'_2, \mu''_2]\tensor
\Lambda (\lambda_3)$ gives an example whose coefficient ring is
Noetherian and of Krull dimension 2.  

If we are given map  $R\lra k$ we may take $\Ca\lra \Ba$ to be $THH(R;
k)\lra THH(k)$,  and take $A$ to be its cofibre. Now apply Dundas's
Lemma to obtain a cofibre sequence
$$A\lra THH(THH(R;k); THH(k)) \lra THH(THH(k)). $$

Unfortunately there seems to be no obvious example for which $A$ is
finite dimensional, or even Noetherian. For example, if we take $R=\Z$
it seems $\pi_*A= \Fp [\mu_2]/(\mu_2^p)\tensor \Gamma (\gamma_{2p})$. 
\end{example}

\subsection{Discussion}
The main obstacle to finding more examples is the need to ensure that the
coefficient rings should be Noetherian, which seems rather rare for
$THH$. In the cases with complete calculations and Noetherian rings,  the coefficient rings are all themselves
Gorenstein. We have not yet  found an example where the ring
spectrum is proxy regular and Gorenstein except when the coefficients are already Gorenstein. 

Our analysis is based on  the  cofibre sequence $A\lra \TbC
\lra \TbB$  and requires that it is \pifinite . We have relied on the fact
that   if $A$ is finite dimensional and $k$ is of characteristic $p$, 
the cofibre sequence is \pifinite. In this case  $\TbC$ will have the
same Krull  dimension as $\TbB$. 

\appendix

\begin{center}
{\large 
Appendices }

\end{center}
We end with two appendices describing closely related
phenomena. They do not form part of the argument, and are included
for comparison. 

 \section{Thom spectra}

Given a 3-fold loop map $f: X\lra BF$, Blumberg-Cohen-Schlichtkrull \cite{BCS} prove
$$THH(Mf)\simeq Mf \sm BX_+. $$
Since $f$ is a 3-fold loop map, $X$ is a 3-fold loop space and
$BX\simeq \Omega BBX$. We expect that $THH(Mf)=C_*(BX; Mf)$ 
being Gorenstein over $Mf$ will be related to $C^*(BBX; Mf)=F(BBX_+, Mf)$ being
Gorenstein over $Mf$. 

\begin{example} {\em ($\Fp$ revisited.)} 
This example is closely related to the fact that  $THH(\Fp)$ is Gorenstein of shift $-3$.

Mahowald  \cite{Mahowald} showed that the
Eilenberg-MacLane spectrum $\Fp$ is the Thom spectrum of 
a map $\Omega^2S^3 \lra BF$. Although this is only a double loop
space map, it is shown in \cite[1.3]{BCS}  that 
$$THH(\Fp)\simeq \Fp \sm \Omega S^3_+=:C_*(\Omega S^3),  $$
but this is not an equivalence of ring spectra, since the ring
structures in homotopy groups are different. 

Since $S^3$ is a 3-manifold,  $C^*(S^3)$ is Gorenstein of shift $-3$,
and by Morita invariance of the Gorenstein condition \cite{DGI1}, we conclude that 
$C_*(\Omega S^3)$ is Gorenstein of shift $-3$. Although it is
precisely parallel, this doesn't directly imply anything about $THH(\Fp)$. 
\end{example}

\begin{example} {\em ($\Z$ revisited.)}
This is closely related to the fact that $THH(\Z; \Z/p)$ is Gorenstein of shift $-2$. 

Mahowald \cite{Mahowald} proved that the
Eilenberg-MacLane spectrum $\Z$ is the Thom spectrum of 
a map $\Omega^2S^3\langle 3\rangle \lra BF$. Although it is only a
double loop map it is shown as \cite[1.4]{BCS}  that 
$THH(\Z)\simeq \Z  \sm \Omega S^3\langle 3 \rangle_+$, and
that  
$$THH(\Z; \Z/p)\simeq \Fp \sm \Omega S^3\langle 3
\rangle_+=C_*(\Omega S^3\langle 3\rangle).$$

Applying cochains to the fibration 
$$K(\Z, 2) \lra S^3\langle 3\rangle  \lra S^3  $$
we get a cofibre sequence of ring spectra, and a standard calculation
shows this is \pifinite . By Corollary \ref{cor:GorascentNoeth}, the cochains on the total
space is Gorenstein if the cochains on the base and the cochains on
the fibre are, and that the shifts add. Since $S^3$ is a 3-manifold, $C^*(S^3)$ is Gorenstein of shift $-3$. 
On the other hand $C_*(\Omega BU(1))=C_*(U(1))$ is Gorenstein of shift 1 since
$U(1) $ is a 1-dimensional compact Lie group, and hence by Morita 
invariance of the Gorenstein condition \cite{DGI1}, we find 
$C^*(BU(1))$ is  Gorenstein of shift 1. By Gorenstein ascent, we deduce that 
$C^*(S^3\langle 3 \rangle)$ is Gorenstein of shift
$-2=-3+1$. By Morita invariance of the Gorenstein condition \cite{DGI1} we conclude that 
$C_*(\Omega S^3\langle 3 \rangle )$ is  Gorenstein of shift $-2$.
\end{example}


\section{Dwyer-Miller and Kontsevich duality}
\label{sec:NAR1}

This section describes some known dualities for Hochschild homology and
cohomology with a similar flavour. The analogue to bear in mind is the case $R=k[x]$ on a
polynomial generator of even degree $d$. This is Gorenstein of shift
$-d-1$. Now we calculate $HH^*(R)=k[x,\alpha]$ where
$\alpha$ is of degree $-d-1$, and $HH_*(R)=k[x, \beta]$ with $\beta$
of  degree $d+1$, so we have $HH^*(R)=\Sigma^{-d-1}HH_*(R)$.

In this section we work under $k$ rather than under the sphere
spectrum $\bbS$. In particular, we assume that our rings are
$k$-algebras so 
the situation is very different to the one considered in the body of
the paper.  The discussion developed from the work of Cohen-Jones \cite{CohenJones}.

The following assumption is more often satisfied by objects like group rings
than the commutative rings we have been concerned with in the body of
the paper. 

\begin{assumption}
\label{ass:DwyerMiller}
We assume that $S=k$, so that $R^e=R\tensor_k R$ and that there is a ring map $R\lra R^e$ of 
$R^e$-modules. Finally, we assume that the bimodule $R$ is  induced from an
$R$-module: $R=R^e\tensor_R k$. 
\end{assumption}

\begin{prop} (Dwyer-Miller)
\label{prop:DM}
If Assumption \ref{ass:DwyerMiller} holds,  $k$ is small over $R$ and $R\lra k$ is Gorenstein of shift $a$ then 
$$HH^*(R;P)\cong \Sigma^a HH_*(R; P)$$ 
for all bimodules $P$. 
\end{prop}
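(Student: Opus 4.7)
The plan is to identify the Hochschild complexes as ordinary $R$-linear $\mathrm{Ext}$ and $\mathrm{Tor}$ against $k$, and then to combine the Gorenstein equivalence $\Hom_R(k,R)\simeq \Sigma^a k$ with smallness of $k$ in a Morita-style argument. So the whole strategy is to reduce the statement about bimodules to the situation already analysed over $R$ in Section \ref{sec:Gor}.

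First, using the assumed $R^e$-module equivalence $R \simeq R^e \tensor_R k$ together with the standard change-of-rings adjunctions along the ring map $R \lra R^e$, I would write
$$\Hom_{R^e}(R, P) \simeq \Hom_{R^e}(R^e \tensor_R k, P) \simeq \Hom_R(k, P),$$
$$R \tensor_{R^e} P \simeq (R^e \tensor_R k)\tensor_{R^e} P \simeq k \tensor_R P,$$
where in both lines $P$ is viewed as an $R$-module by restriction along $R\lra R^e$. Passing to homotopy then identifies $HH^*(R;P)\cong \pi_*\Hom_R(k,P)$ and $HH_*(R;P)\cong \pi_*(k\tensor_R P)$, reducing the proposition to producing an equivalence $\Hom_R(k,P)\simeq \Sigma^a(k\tensor_R P)$ of $k$-module spectra, natural in $P$.

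For this, since $k$ is small over $R$, the canonical Morita evaluation map
$$\Hom_R(k,R)\tensor_R P \lra \Hom_R(k,P)$$
is an equivalence for every $R$-module $P$: it holds tautologically for $P=R$, and hence for any $P$ built from $R$ by arbitrary sums and cofibre sequences, because both sides commute with these operations in $P$ (using smallness of $k$ for the right-hand side). Every $R$-module is so built. Applying the Gorenstein hypothesis $\Hom_R(k,R)\simeq \Sigma^a k$ rewrites the left-hand side as $\Sigma^a(k\tensor_R P)$, giving the required equivalence and hence $HH^*(R;P)\cong \Sigma^a HH_*(R;P)$.

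The one step that needs real care is the Morita equivalence $\Hom_R(k,R)\tensor_R P \simeq \Hom_R(k,P)$; once this is in hand the rest is formal manipulation of Assumption \ref{ass:DwyerMiller}. It is also worth noting that, unlike the main theorem of the paper, the argument needs neither characteristic-$p$ nor Noetherian hypotheses: all the input is packaged into the Gorenstein shift $a$ and the smallness of $k$, which is exactly why working under $k$ (as opposed to under $\bbS$) produces the clean shift $-a$ advertised in the Introduction.
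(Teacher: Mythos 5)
Your proof is correct and follows essentially the same route as the paper: both reduce $HH^*(R;P)$ and $HH_*(R;P)$ via Assumption \ref{ass:DwyerMiller} to $\Hom_R(k,P^{ad})$ and $k\tensor_R P^{ad}$ respectively, and then use smallness of $k$ together with the Gorenstein equivalence $\Hom_R(k,R)\simeq\Sigma^a k$ to bridge the two. The paper presents this as a single chain of six identifications, while you unpack the Morita evaluation step as a thick-subcategory argument, but the underlying reasoning is identical.
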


\begin{remark}
There seems no prospect of a result like this for $THH$ since (as in
the case with $P=k$ for instance, $THH^{\bullet}(k)$ (for example) is not bounded below. 
\end{remark}

\begin{proof}
We argue as follows, where the third equivalence requires $k$ to be
small, and where $P^{ad}$ is the restriction of $P$ along the map
$R\lra R^e$. 
$$\begin{array}{rcl}
HH^*(R;P)&=&\Hom_{R^e}(R,P)\\
&=&\Hom_R(k, P^{ad})\\
&\simeq &\Hom_R(k, R)\tensor_R P^{ad}\\
&=&\Sigma^ak \tensor_R P^{ad}\\
&=&\Sigma^aR \tensor_{R^e} P\\
&=&\Sigma^aHH_*(R ;P)
\end{array}$$
\end{proof}

\begin{remark}
Take $R=C_*(\Omega X)$ for a simply connected $d$-manifold $X$. Since
$X$ can be given the structure of a finite CW complex, $C^*(X)$ is
finitely built from $k$, and applying $\Hom_{C^*(X)}(\cdot , k)$ we
see that $k$ is small over $C_*(\Omega X)$ and the hypotheses of
Proposition \ref{prop:DM} hold.  

By the Morita invariance of the Gorenstein condition \cite[Proposition
8.5]{DGI1} this is Gorenstein
of shift $a=-d$. Finally, note that   $HH_*(R)=H_*(\Lambda X)$, so
taking $P=R$, we see $\Sigma^d H_*(\Lambda X)=HH^*(R)$, showing that the shifted
homology of the free loop space has a ring structure; Malm \cite{Malm}
shows this 
corresponds to the Chas-Sullivan product.  
\end{remark}

\begin{cor} (Kontsevich duality)
If Assumption \ref{ass:DwyerMiller} holds, $k$ is small over $R$ and $R\lra k$ is Gorenstein with shift $a$
then $R^e\lra R$ is relatively Gorenstein with shift $a$. 
\end{cor}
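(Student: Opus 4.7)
The corollary follows by directly specializing the preceding proposition to the bimodule $P = R^e$. My plan is to observe that with this choice of coefficients, the left-hand side $HH^*(R; R^e) = \Hom_{R^e}(R, R^e)$ is exactly the object appearing in the definition of relative Gorensteinness for $R^e \lra R$, while the right-hand side collapses to something trivial.

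More precisely, the first step is to take $P = R^e$, regarded as an $(R^e, R^e)$-bimodule using its own multiplication. Then
\[
HH_\bullet(R; R^e) = R \tensor_{R^e} R^e \simeq R,
\]
so the proposition's conclusion $HH^*(R; P) \simeq \Sigma^a HH_*(R; P)$ specializes to an equivalence
\[
\Hom_{R^e}(R, R^e) \simeq \Sigma^a R.
\]
This is exactly the condition defining $R^e \lra R$ to be relatively Gorenstein of shift $a$.

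The one point that needs verification is that the chain of equivalences used in the proof of the Dwyer--Miller proposition is functorial in the bimodule $P$, so that the equivalence $\Hom_{R^e}(R, R^e) \simeq \Sigma^a R$ respects the residual $R$-module structure (coming from the second tensor factor in $R^e$ that is not consumed by either $\Hom_{R^e}(R, -)$ on the left or by the quotient $R \tensor_{R^e} -$ on the right). This follows because each step in the Dwyer--Miller argument, namely the change-of-rings isomorphism $\Hom_{R^e}(R, P) \simeq \Hom_R(k, P^{\mathrm{ad}})$, the smallness-of-$k$ equivalence $\Hom_R(k, -) \simeq \Hom_R(k, R) \tensor_R (-)$, and the Gorenstein identification $\Hom_R(k, R) \simeq \Sigma^a k$, is natural in the module argument.

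The main obstacle, such as it is, is therefore purely bookkeeping: tracking the surviving $R$-action through the Assumption \ref{ass:DwyerMiller} identifications (in particular through the isomorphism $R^e \tensor_R k \simeq R$ used to convert between $R$-modules and bimodules). Since the previous proposition has already done the substantive work---packaging the Gorenstein shift into a comparison between Hochschild cohomology and homology with arbitrary bimodule coefficients---no further input is required beyond this specialization.
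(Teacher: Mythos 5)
Your argument is exactly the paper's: specialize the Dwyer--Miller proposition to $P=R^e$, compute $HH_\bullet(R|S;R^e)=R\tensor_{R^e}R^e\simeq R$, and read off $\Hom_{R^e}(R,R^e)\simeq\Sigma^a R$. Your extra remark about tracking the residual $R$-module structure through the equivalences is a worthwhile point that the paper leaves implicit, but it does not change the route.
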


\begin{remark}
The connection between the Gorenstein conditions on $R$ and $R^e$ in
commutative algebra is of great interest \cite{AI,
  AIL}.   
\end{remark} 

\begin{proof}
Taking $P=R^e$ in the Proposition \ref{prop:DM} we find
$$\Hom_{R^e}(R, R^e)=HH^*(R|S;R^e)=\Sigma^aHH_*(R|S;R^e)=\Sigma^a
R\tensor_{R^e}R^e=\Sigma^a R. $$
\end{proof}

Now consider the $(R|S)$-bimodule $P=k$, and note that if $R$ and $k$
are commutative rings, then so is 
$$HH_*(R|S;k)=R\tensor_{R^e}k.$$
Furthermore $R^e$ is an $R$-algebra, so we have an algebra map 
$$R\tensor_{R^e}k\lra R^e\tensor_{R^e} k \stackrel{\simeq}\lra k$$

\begin{cor} (Algebraic Ausoni-B\"okstedt duality)
If Assumption \ref{ass:DwyerMiller} holds, $k$ is small over $R$ and $R\lra k$ is Gorenstein with shift $a$
then 
$HH_*(R|S;k) \lra k$ is Gorenstein with shift $-a$. 
\end{cor}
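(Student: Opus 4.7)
The plan is to identify the Hochschild homology spectrum $H := HH_{\bullet}(R|S;k) = R \tensor_{R^e} k$ with the derived pushout $k \tensor_R k$, and then to apply Gorenstein Ascent (Lemma \ref{lem:Gorascent}) to the resulting cofibre sequence $R \lra k \lra H$ of commutative $k$-algebras. This is the direct algebraic analogue of Corollary \ref{cor:gor}, obtained by the same strategy but bypassing the use of B\"okstedt's calculation of $THH(k)$, which is why the shift comes out as $-a$ rather than $-a-3$.

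First I would use the ``induced bimodule'' clause of Assumption \ref{ass:DwyerMiller}, namely $R \simeq R^e \tensor_R k$ as $R^e$-modules, to rewrite
$$H = R \tensor_{R^e} k \simeq (R^e \tensor_R k) \tensor_{R^e} k \simeq k \tensor_R (R^e \tensor_{R^e} k) \simeq k \tensor_R k,$$
where the middle rearrangement uses associativity of tensor products (valid because everything in sight is a module over the commutative ring $R^e$). In particular $R \lra k \lra H$ is a cofibre sequence of commutative $k$-algebras augmented to $k$.

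Next I would apply Lemma \ref{lem:Gorascent}, with the lemma's $(S, R, Q)$ instantiated as $(R, k, H)$. The only hypothesis to check is that the natural map
$$\nu \colon \Hom_R(k, R) \tensor_R k \lra \Hom_R(k, k)$$
is an equivalence, and this is automatic from the smallness of $k$ over $R$, since smallness is precisely the assertion that $\Hom_R(k, -)$ commutes with tensor against any $R$-module. The conclusion of the lemma reads
$$\Hom_k(k, k) \simeq \Hom_H\bigl(k, \Hom_R(k, R) \tensor_k H\bigr).$$
Substituting $\Hom_k(k,k) \simeq k$ and the Gorenstein hypothesis $\Hom_R(k, R) \simeq \Sigma^a k$ yields $k \simeq \Sigma^a \Hom_H(k, H)$, i.e.\ $\Hom_H(k, H) \simeq \Sigma^{-a} k$, which is exactly the Gorenstein condition for $H \lra k$ of shift $-a$.

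The main obstacle is essentially none: once the identification $H \simeq k \tensor_R k$ of the first paragraph is granted (which is precisely what the induced-bimodule clause of Assumption \ref{ass:DwyerMiller} is designed to deliver), the rest drops out of Lemma \ref{lem:Gorascent} and the arithmetic of shifts recorded in the corresponding subsection. One might also note that no ``proxy-regular'' discussion is required here, since $k$ is already small over $R$ and therefore the ascent lemma applies in its basic form rather than through the limiting construction of Proposition \ref{prop:GorascentNoeth}.
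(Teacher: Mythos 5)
Your proof is correct, but it takes a genuinely different route from the paper's own argument. The paper first establishes the Dwyer--Miller proposition $HH^{\bullet}(R|S;P)\simeq\Sigma^{a}HH_{\bullet}(R|S;P)$ under the same hypotheses, then sets $P=k$ and finishes by a chain of adjunctions that in effect invokes the Matlis lifting property of $\Hom_k(H,k)$ for $H=HH_{\bullet}(R|S;k)$. You instead use the induced-bimodule clause of Assumption \ref{ass:DwyerMiller} to identify $H$ with the derived pushout $k\tensor_R k$ and then apply Gorenstein Ascent (Lemma~\ref{lem:Gorascent}) to the cofibre sequence $R\lra k\lra H$; this is exactly the strategy the paper uses for Corollary~\ref{cor:gor}, transported from $THH$ to algebraic Hochschild homology, where the analogue of B\"okstedt's calculation is trivial (hence the missing $-3$). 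Your route has the conceptual merit of unifying the two corollaries and avoids the Dwyer--Miller proposition entirely; the paper's route is shorter once that proposition is available, and that proposition is wanted in any case since it applies to arbitrary bimodule coefficients, not just $P=k$. One step worth making explicit: the rearrangement $(R^e\tensor_R k)\tensor_{R^e}k\simeq k\tensor_R k$ relies on the composite of the ring map $R\to R^e$ with the augmentation $R^e\to k$ being the given augmentation $R\to k$, so that the right-hand factor carries the expected $R$-module structure; this is certainly the intended reading (the same compatibility is used implicitly when the Dwyer--Miller proof passes from $\Hom_{R^e}(R,P)$ to $\Hom_R(k,P^{ad})$), but it is not spelled out in Assumption~\ref{ass:DwyerMiller}.
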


\begin{proof}
Taking $P=k$ in the above we find
$$HH^{\bullet}(R|S;k)=\Sigma^a HH_{\bullet}(R|S;k). $$
Now we calculate
$$\begin{array}{rcl}
\Hom_{HH_{\bullet}(R|S;k)}(k,HH_{\bullet}(R|S;k))
 &=&\Sigma^{-a}\Hom_{HH_{\bullet}(R|S;k)} (k,HH^{\bullet}(R|S;k))\\
 &=&\Sigma^{-a}\Hom_{HH_{\bullet}(R|S;k)}(k, \Hom_{R^e}(R,k))\\
 &=&\Sigma^{-a}\Hom_{HH_{\bullet}(R|S;k)}(R\tensor_{R^e} k,k)\\
 &=&\Sigma^{-a}k
\end{array}$$
\end{proof}





\end{document}